\newtheorem{theorem}{Theorem}[section]
\newtheorem{pro}[theorem]{Proposition}
\newtheorem{lem}[theorem]{Lemma}
\newtheorem{coro}[theorem]{Corollary}
\theoremstyle{definition}
\newtheorem{exam}[theorem]{Example}
\def\PSL{\hbox{\rm PSL}}
\def\Cay{\hbox{\rm Cay}}
\def\PGL{\hbox{\rm PGL}}
\def\ZZZ{\mathbb{Z}}
\def\Ga{\Gamma}
\long\def\delete#1{}
\newcommand{\be}{\begin{equation}}
\newcommand{\ee}{\end{equation}}
\newcommand{\bea}{\begin{eqnarray}}
\newcommand{\eea}{\end{eqnarray}}
\newcommand{\bean}{\begin{eqnarray*}}
\newcommand{\eean}{\end{eqnarray*}}
\begin{document}

\title{Characterizing subgroup perfect codes by $2$-subgroups}
\author{Junyang Zhang}
\affil{{\small School of Mathematical Sciences, Chongqing Normal University\\ Chongqing 401331, People's Republic of China}}

\date{}

\maketitle

\footnotetext{E-mail addresses: jyzhang@cqnu.edu.cn (Junyang Zhang)}
\begin{abstract}
A perfect code in a graph $\Ga$ is a subset $C$ of $V(\Ga)$ such that no two vertices in $C$ are adjacent and every vertex in $V(\Ga)\setminus C$ is adjacent to exactly one vertex in $C$. Let $G$ be a finite group and $C$ a subset of $G$. Then $C$ is said to be a perfect code of $G$ if there exists a Cayley graph of $G$ admiting $C$ as a perfect code. It is proved that a subgroup $H$ of $G$ is a perfect code of $G$ if and only if a Sylow $2$-subgroup of $H$ is a perfect code of $G$.  This result provides a way to simplify the study of subgroup perfect codes of general groups to the study of subgroup perfect codes of $2$-groups. As an application, a criterion for determining subgroup perfect codes of projective special linear groups $\PSL(2,q)$ is given.

\medskip
{\em Keywords:} Cayley graph; perfect code; subgroup perfect code; projective special linear group

\medskip
{\em AMS subject classifications (2020):} 05C25, 05C69, 94B99
\end{abstract}

\section{Introduction}
\label{sec:intro}
All groups considered in the paper are finite groups with identity element denoted by $1$, and all graphs considered are finite, undirected and simple.
For a graph $\Gamma$, we use  $V(\Gamma)$ and $E(\Gamma)$ to denote its vertex set and edge set respectively. The \emph{distance} in $\Ga$ between two vertices is the length of a shortest path between the two vertices or $\infty$ if there is no path in $\Gamma$ joining them. Let $r$ be a positive integer. A subset $C$ of $V(\Gamma)$ is called \cite{Big, Kr86} a \emph{perfect $r$-error-correcting code} (or \emph{perfect $r$-code} for short) in $\Gamma$ if every vertex of $\Gamma$ is at distance no more than $r$ to exactly one vertex in $C$.  A perfect $1$-code is usually called a {\em perfect code}. Equivalently, a subset $C$ of $V(\Gamma)$ is a perfect code in $\Gamma$ if $C$ is an independent set of $\Gamma$ and every vertex in $V(\Gamma) \setminus C$ is adjacent to exactly one vertex in $C$. A perfect code in a graph is also called an efficient dominating set \cite{DeS} or independent perfect dominating set \cite{Le} of the graph.

The notion of perfect $r$-codes in graphs was firstly introduced by Biggs \cite{Big, Kr86} as a generalization of the notions of perfect $r$-codes under the Hamming metric and Lee metric. Recall that in coding theory the Hamming distance between words of length $n$ over an alphabet of size $m \ge 2$ is precisely the graph distance in the Hamming graph $H(n, m)$ \cite{MS77}. Therefore perfect $r$-codes in $H(n,m)$ are exactly those in the classical setting under the Hamming metric. Similarly, the Lee distance \cite{HK18} between words of length $n$ over an alphabet of size $m \ge 3$ is precisely the graph distance in the Cartesian product $L(n,m)$ of $n$ copies of the cycle of length $m$ and therefore perfect $r$-codes in $L(n,m)$ are exactly those in the classical setting under the Lee metric.

Let $G$ be a group and $S$ a subset of $G$ satisfying $S^{-1}:=\{x^{-1}\mid x \in S\} = S$ and $1\notin S$. The {\em Cayley graph} ${\rm Cay}(G, S)$ of $G$ with \emph{connection set} $S$ is defined as the graph with vertex set $G$ such that two elements $x,y$ of $G$ are adjacent if and only if $yx^{-1}\in S$.
Note that $H(n,m)$ and $L(n,m)$ are Cayley graphs of the additive group $\ZZZ_m^n$ with connection sets $S_H$ and $S_L$, respectively, where $S_H$ consists of all elements of $\ZZZ_m^n$ with precisely one nonzero coordinate, and $S_L$ consists of all elements of $\ZZZ_m^n$ such that exactly one coordinate is $\pm 1\pmod m$ and all other coordinates are zero.

Perfect codes in Cayley graphs have been extensively studied in the literature, see \cite[Section 1]{HXZ18} for a brief survey and \cite{DSLW16, FHZ, Ta13, Z15} for a few recent papers. In particular, perfect codes in Cayley graphs which are subgroups of the underlying groups are especially interesting since they are generalizations of perfect linear codes \cite{Va75} in the classical setting. In \cite{HXZ18}, Huang et al introduced the following concepts: A subset $C$ of a group $G$ is called a {\em perfect code} of $G$ if it is a perfect code of some Cayley graph of $G$; if further $C$ is a subgroup of $G$, then $C$ is called a {\em subgroup perfect code} of $G$.

In \cite{HXZ18}, some interesting results on normal subgroups of a group to be perfect codes were obtained. Subsequently, these results are extended to general subgroups of a group \cite{MWWZ20,CWX2020,ZZ2021}. Let $G$ be a group and $H$ a subgroup of $G$. In \cite{ZZ2021,ZZ2022}, the author and Zhou proved that $H$ is a perfect code of $G$ if there exists a Sylow $2$-subgroup of $H$ which is a perfect code of $G$. They also proved that for a metabelian group $G$, a normal subgroup $H$ of $G$ is a perfect code of $G$ if and only if a Sylow $2$-subgroup of $H$ is a perfect code of $G$. This result was recently generalized to all groups by Khaefi et al \cite{KAK2022}. In this paper, we show that the restriction on the normality of $H$ can also be removed. Actually, we prove the following result.
\begin{theorem}
\label{sl2}
Let $G$ be a finite group and $H$ a subgroup of $G$. Then the following statements are equivalent:
\begin{enumerate}
  \item every  Sylow $2$-subgroup of $H$ is a perfect code of $G$;
  \item $H$ has a Sylow $2$-subgroup which is a perfect code of $G$;
  \item $H$ is a perfect code of $G$.
\end{enumerate}
\end{theorem}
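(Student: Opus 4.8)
The plan is to run everything through the combinatorial description of subgroup perfect codes established for general subgroups in \cite{MWWZ20, CWX2020, ZZ2021}: a subgroup $K$ of a group $L$ is a perfect code of $L$ if and only if for every $g \in N_L(K)$ with $g^2 \in K$ there is some $k \in K$ with $(kg)^2 = 1$; write $(\star)$ for this condition. Two remarks are immediate. First, $(\star)$ is invariant under conjugation in $L$: if $K$ is a perfect code of $L$ and $x \in L$, then for $g \in N_L(x^{-1}Kx)$ with $g^2 \in x^{-1}Kx$ one applies $(\star)$ to $xgx^{-1}$ and conjugates the witness back, so $x^{-1}Kx$ is a perfect code of $L$. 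Second, $(\star)$ becomes weaker in smaller overgroups: if $K \le L' \le L$ then $N_{L'}(K) \subseteq N_L(K)$, so a perfect code of $L$ that is contained in $L'$ is a perfect code of $L'$. Since all Sylow $2$-subgroups of $H$ are $H$-conjugate, hence $G$-conjugate, the first remark shows that (i) and (ii) are equivalent; (i) $\Rightarrow$ (ii) is trivial, and (ii) $\Rightarrow$ (iii) is the result of \cite{ZZ2021, ZZ2022} recalled above. So the whole content is the implication (iii) $\Rightarrow$ (i), which by the conjugacy remark reduces to: \emph{if $H$ is a perfect code of $G$, then some fixed $P \in \Syl_2(H)$ satisfies $(\star)$ in $G$}.

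To prove this, fix $P \in \Syl_2(H)$, take $g \in N_G(P)$ with $g^2 \in P$, and seek $p \in P$ with $(pg)^2 = 1$. As $g^2$ is a $2$-element so is $g$; if $g \in P$ take $p = g^{-1}$, so assume $g \notin P$, and then $g \notin H$ as well (otherwise $P\langle g\rangle = P \cup Pg$ would be a $2$-group inside $H$ properly containing the Sylow $2$-subgroup $P$). We induct on $|G|$. If $\langle H, g\rangle \ne G$, then by the second remark $H$ is a perfect code of $\langle H, g\rangle$ and $P \in \Syl_2(H)$, so induction inside $\langle H, g\rangle$ supplies $p$; hence assume $G = \langle H, g\rangle$. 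Next put $N := \Core_G(H)$; one checks from $(\star)$ and from an index count that $H/N$ is a perfect code of $G/N$ and $PN/N \in \Syl_2(H/N)$, so if $N \ne 1$ induction gives $p_0 \in P$ with $(p_0 g)^2 \in N$, and replacing $g$ by $p_0 g$ (still normalizing $P$, and with square in $P$ since $g$ normalizes $P$) we may assume in addition $g^2 \in P \cap N$. In particular if $N = 1$ then $g^2 = 1$ and we are done, so henceforth $N \ne 1$ and $g^2 \in P \cap N$.

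The heart of the argument is the case $g \in N_G(H)$. Here $H^* := H\langle g\rangle = H \cup Hg$ is a subgroup with $H \trianglelefteq H^*$ of index $2$, and $P^* := P\langle g\rangle = P \cup Pg$ is a subgroup with $P^* \cap H = P$ and $[P^*:P] = 2$; comparing the $2$-parts of $|H^*|$ and $|P^*|$ shows $P^* \in \Syl_2(H^*)$, and $Pg = P^* \setminus P$ is exactly the nontrivial coset of $P$ in $P^*$. Applying $(\star)$ to $H$ at the element $g$ gives $h \in H$ with $x_0 := hg$ an involution lying in $Hg = H^* \setminus H$. I claim any two Sylow $2$-subgroups of $H^*$ are conjugate by an element of $H$: none of them lies in $H$ (they are too large), so the normalizer in $H^*$ of any such subgroup meets $H^* \setminus H$, hence fills $H^*$ together with $H$, and a short computation then promotes $H^*$-conjugacy of Sylow $2$-subgroups to $H$-conjugacy. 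Choosing $Q^* \in \Syl_2(H^*)$ with $\langle x_0\rangle \le Q^*$ and $c \in H$ with $Q^* = c^{-1}P^*c$, the element $cx_0c^{-1}$ is an involution in $P^*$, and since $x_0 \notin H$ and $H \trianglelefteq H^*$ it lies in $P^* \setminus P = Pg$; thus $cx_0c^{-1} = pg$ for some $p \in P$, and $(pg)^2 = (cx_0c^{-1})^2 = 1$.

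The remaining case, $g \in N_G(P)$ but $g \notin N_G(H)$, is the step I expect to be the main obstacle: then $H \cup Hg$ is not a subgroup and $(\star)$ for $H$ gives no information about $g$, so the involution in $Pg$ cannot be produced as above. The intended route is to use the reductions already in force --- $G = \langle H, g\rangle$ and $g^2 \in \Core_G(H)$ --- via the subgroup $M := \langle H, H^g\rangle$, which is normalized by $H$ and, because $g^2 \in H$, also by $g$, hence is normal in $G$; exploiting the structure of $M$ together with the normal-subgroup case of the theorem (Khaefi et al.\ \cite{KAK2022}) should bring this case back to one in which the distinguished element normalizes the relevant subgroup, i.e.\ to the case just treated. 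Carrying out this final reduction while keeping control of both the Sylow $2$-subgroup and the condition $g^2 \in P$ is the technically delicate point.
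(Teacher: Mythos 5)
Your reduction to the single implication (iii)$\Rightarrow$(i), and your treatment of the case $g\in N_G(H)$, are sound and in fact mirror the second half of the paper's argument. But the proof is genuinely incomplete at exactly the point you flag: the case $g\in N_G(P)\setminus N_G(H)$ is the whole difficulty, and the route you sketch through $M=\langle H,H^g\rangle$ and the normal-subgroup case of \cite{KAK2022} is not carried out (and it is not clear it can be: when $M=G$ you gain nothing, and when $[G:M]=2$ you still face a $g$ outside $M$ that need not normalize $H$). The underlying problem is that your criterion $(\star)$, quantified only over $g\in N_L(K)$, is not the correct characterization of perfect codes for a general subgroup $K$ --- it is only necessary --- so applying it to $H$ can never see an element $g$ that normalizes $P$ but not $H$.

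The missing idea is the double-coset form of the criterion (Lemma~\ref{basic} of the paper, from \cite{ZZ2022}/\cite{CWX2020}): $H$ is a perfect code of $G$ if and only if every double coset $D=HxH$ with $D=D^{-1}$ that is a union of an \emph{odd} number of left cosets of $H$ contains an involution. This is where the hypothesis $g\in N_G(P)$ pays off even when $g\notin N_G(H)$: since $P=gPg^{-1}\le H\cap gHg^{-1}\le H$ and $P\in\Syl_2(H)$, the index $|H:H\cap gHg^{-1}|$ is automatically odd, and $g^2\in H$ gives $(HgH)^{-1}=HgH$; hence $HgH$ contains an involution $h_1gh_2$, and conjugating by $h_1$ yields an involution $gh\in gH$ with $h=h_2h_1\in H$. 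From there the paper finishes with a Sylow argument inside $L=\langle P,g,h\rangle$ rather than inside $H\langle g\rangle$: one checks $\langle P,h\rangle$ is normal of index $2$ in $L$, that $\langle P,g\rangle$ is a Sylow $2$-subgroup of $L$ (because $|\langle P,h\rangle : P|$ is odd), and then conjugates a Sylow $2$-subgroup containing the involution $gh$ onto $\langle P,g\rangle$ to land an involution in $gP$ --- exactly the mechanism of your $H^*$ argument, but in a subgroup small enough that normality is free. Replacing your appeal to $(\star)$ for $H$ by Lemma~\ref{basic}, and your $H^*$ by this $L$, closes the gap; without that, the proposal does not prove the theorem. (Separately, your claim that $H/\Core_G(H)$ is a perfect code of $G/\Core_G(H)$ is asserted without proof, but that reduction becomes unnecessary once the double-coset lemma is used.)
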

In order to reduce the problem of determining subgroup perfect codes of general groups to that of $2$-groups, we prove a result by using Theorem \ref{sl2} as follows.
\begin{theorem}
\label{NGQ}
Let $G$ be a finite group and $H$ a subgroup of $G$. Let $Q$ be a Sylow $2$-subgroup of $H$ and $P$ a Sylow $2$-subgroup of $N_{G}(Q)$. Then $H$ is a perfect code of $G$ if and only if $Q$ is a perfect code of $P$.
\end{theorem}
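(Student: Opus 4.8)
Since $Q$ is a Sylow $2$-subgroup of $H$, Theorem~\ref{sl2} says that $H$ is a perfect code of $G$ if and only if $Q$ is a perfect code of $G$, so the task reduces to showing that $Q$ is a perfect code of $G$ if and only if $Q$ is a perfect code of $P$. I would work throughout with two known reformulations of perfect codes (from \cite{HXZ18, MWWZ20} and the later papers): a subgroup $K$ of a group $L$ is a perfect code of $L$ exactly when $L$ has a left transversal $T$ of $K$ with $1\in T=T^{-1}$; and when in addition $K\trianglelefteq L$, this holds if and only if $xK$ contains an involution or the identity for every $x\in L$ with $x^{2}\in K$. Two simple facts I would note at the start: (a) if $K\le M\le L$ and $K$ is a perfect code of $L$, then $K$ is a perfect code of $M$, since $T\cap M$ is a symmetric left transversal of $K$ in $M$ through $1$; and (b) $Q\le P$, because $Q$ is a normal $2$-subgroup of $N:=N_{G}(Q)$. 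Fact (a) gives the forward direction at once ($Q$ a perfect code of $G$ $\Rightarrow$ $Q$ a perfect code of $P$, via $Q\le P\le G$), so the whole problem is to prove $Q$ a perfect code of $P$ $\Rightarrow$ $Q$ a perfect code of $N$ $\Rightarrow$ $Q$ a perfect code of $G$.

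For ``$P\Rightarrow N$'': here $Q\trianglelefteq P$ and $Q\trianglelefteq N$ and $P/Q\in\Syl_{2}(N/Q)$, so by the normal-subgroup criterion it suffices, given $x\in N$ with $x^{2}\in Q$, to find an involution or the identity in $xQ$. In $N/Q$ the image $xQ$ has order dividing $2$, hence is a $2$-element, so some conjugate $\bar n^{-1}(xQ)\bar n$ lies in $P/Q$. Lifting $\bar n$ to $n\in N$ gives $y:=n^{-1}xn\in P$ with $y^{2}=n^{-1}x^{2}n\in Q$ (as $n$ normalizes $Q$); the hypothesis inside $P$ yields $z\in yQ$ with $z^{2}=1$; and then $nzn^{-1}$ is an involution or the identity lying in $nyQn^{-1}=xQ$, again because $n$ normalizes $Q$.

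The implication ``$N\Rightarrow G$'' is the core. Now $Q$ need not be normal in $G$, so I would use the full transversal criterion and extend a symmetric left transversal $T_{1}$ of $Q$ in $N$ to one of $Q$ in $G$. Every left $Q$-coset of $G$ either lies in $N$ (and is represented by $T_{1}$) or is disjoint from $N$, and the double cosets $QxQ$ with $x\notin N$ are exactly those disjoint from $N$; inversion permutes them in pairs $\{QxQ,Qx^{-1}Q\}$. For a pair of distinct double cosets, pick any bijection between the left $Q$-cosets of $QxQ$ and those of $Qx^{-1}Q$ (there are equally many); for an inversion-invariant one $QxQ=Qx^{-1}Q$, pick a fixed-point-free involution on its left $Q$-cosets. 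In each matched pair $x_{1}Q\leftrightarrow x_{2}Q$ one can choose $t\in x_{1}Q$ with $t^{-1}\in x_{2}Q$, because inside a common double coset every right $Q$-coset meets every left $Q$-coset: writing them as $Qx\beta$ and $\gamma xQ$ with $\beta,\gamma\in Q$, the element $\gamma x\beta$ lies in both. Throwing all the resulting pairs $\{t,t^{-1}\}$ into $T_{1}$ produces a symmetric left transversal of $Q$ in $G$ containing $1$.

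The only place this can fail — and the only use of the hypothesis that $Q$ is a $2$-group — is the inversion-invariant case: a fixed-point-free involution on the left $Q$-cosets of $QxQ$ exists only when their number is even, for otherwise one would be forced to self-pair a coset, i.e.\ to find $t$ with $t$ and $t^{-1}$ both in the same $x_{1}Q$, which forces $t^{2}=1$ and is not available in general. But the number of left $Q$-cosets in $QxQ$ is $[Q:Q\cap xQx^{-1}]$, a power of $2$, and it equals $1$ precisely when $x$ normalizes $Q$; since $x\notin N$ here it is at least $2$, hence even. This parity point is the main obstacle, and it is exactly what the $2$-group hypothesis settles; the remaining verifications (that the extended transversal is symmetric and meets every coset exactly once) are routine.
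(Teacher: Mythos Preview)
Your proof is correct and follows essentially the same line as the paper's. The core of the backward direction---given $x\in N_G(Q)$ with $x^2\in Q$, conjugate $x$ into $P$ via a Sylow argument inside $N_G(Q)$, find the required involution there using the hypothesis, and conjugate back---is exactly what the paper does; the only difference is that your ``$N\Rightarrow G$'' step reproves (for $2$-groups) what the paper simply cites as Lemma~\ref{equivalent} (equivalently Lemma~\ref{normal}).
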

There are four sections in this paper. After this introduction section, in Section 2 we fix the notations for the paper and list some known results for later use. In Section 3, we prove  Theorems \ref{sl2} and \ref{NGQ}, and we also deduce a few other results base on Theorem \ref{NGQ}. As an application of our main reults, in Section 4 we give a criterion  for determining subgroup perfect codes of the $2$-dimensional projective special linear groups $\PSL(2,q)$ where $q$ is a prime power.

\section{Preliminaries}
\label{sec:pre}
We at first fix some notations and terminologies.
For a finite set $S$, we use $|S|$ to denote the number of elements contained in $S$. Let $G$ be a group. For a subgroup $H$ of $G$, we use $|G:H|$ and $N_{G}(H)$ to denote the index of $H$ in $G$ and the normalizer of $H$ in $G$ respectively. For any subset $X$ of $G$, we use $\langle X\rangle$ to denote the subgroup of $G$ generated by $X$. An element of $G$ is said to be an \emph{involution} if it is of order $2$. A subgroup $P$ of $G$ with order a power of a prime $p$ is called a \emph{$p$-subgroup} of $G$. If further $|G:P|$ is not divisible by $p$, then $P$ is called a \emph{Sylow $p$-subgroup} of $G$.

\medskip
Now we list some known results for later use. The first two lemmas seems some what trivial and can be found in \cite{ZZ2021}.

\begin{lem}[\cite{ZZ2021}]
\label{sub}
Let $G$ be a group and $H$ a subgroup of $G$. Then $H$ is a perfect code of $G$ if and only if it is a perfect code of any subgroup of $G$ which contains $H$.
\end{lem}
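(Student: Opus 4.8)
The plan is to recall the combinatorial characterization of subgroup perfect codes and show that it is "inherited" both upward and downward along the subgroup lattice. Recall (from \cite{HXZ18,MWWZ20}) that a subgroup $H$ of a group $K$ is a perfect code of $K$ if and only if there exists a subset $T$ of $K$, containing $1$ and meeting every double coset $HxH$ with $x^2\in H$ in exactly... more cleanly, the cleanest equivalent condition available is: $H$ is a perfect code of $K$ if and only if there is a right transversal $\mathcal{T}$ of $H$ in $K$ such that $\mathcal{T}^{-1}=\mathcal{T}$, i.e. for every $x\in K$ with $Hx\neq H$ one can choose the coset representatives so that the representative of $Hx^{-1}$ is the inverse of the representative of $Hx$. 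Equivalently, for every $g\in K$ there is a choice making the "pairing" $Hg\leftrightarrow Hg^{-1}$ consistent, and the only constraint is on cosets $Hg$ with $Hg=Hg^{-1}$, where one needs an element $t\in Hg$ with $t^2\in H$ (indeed with $t^{-1}\in Ht$). I would state this as the working criterion at the start of the proof.

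With that criterion in hand, the proof splits into two implications. First, suppose $H$ is a perfect code of $K$ and let $L$ be a subgroup with $H\le L\le K$; I want to show $H$ is a perfect code of $L$. Take a symmetric right transversal $\mathcal{T}$ of $H$ in $K$ witnessing the perfect code property. The cosets of $H$ contained in $L$ are precisely those $Hx$ with $x\in L$, and they are closed under inversion; so the subset $\mathcal{T}\cap L$ is a symmetric right transversal of $H$ in $L$, which witnesses that $H$ is a perfect code of $L$. The only point to check is that $\mathcal{T}\cap L$ really is a transversal of $H$ in $L$: this holds because each coset $Hx\subseteq L$ has its $\mathcal{T}$-representative lying in $Hx\subseteq L$, so $\mathcal{T}\cap L$ hits each such coset exactly once.

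Conversely, suppose $H$ is a perfect code of every subgroup of $K$ containing $H$; in particular it is a perfect code of $K$ itself (taking the subgroup to be $K$), which is the nontrivial direction stated in the lemma. (One should note the degenerate reading: "any subgroup of $G$ which contains $H$" includes $G$, so this direction is immediate.) I expect the main content, and the only mild obstacle, to be the forward direction above, and specifically the bookkeeping that restricting a symmetric transversal of $H$ in $K$ to $L$ yields a symmetric transversal of $H$ in $L$; this is routine once the transversal criterion is set up correctly, and it is likely the reason the authors call the lemma "somewhat trivial." If the paper's ambient references instead phrase the criterion via double cosets or via the existence of an element $t\in Hg$ with $t^{-1}\in Ht$ for self-paired cosets, the same two-line argument goes through verbatim, since that condition for a coset $Hg\subseteq L$ is intrinsic to $L$ and does not see the ambient group $K$.
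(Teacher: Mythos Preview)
The paper does not supply its own proof of this lemma; it is quoted from \cite{ZZ2021} with the comment that it ``seems somewhat trivial.'' So there is nothing to compare against at the level of argument.

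Your proof is correct. The inverse-closed right transversal criterion you invoke is exactly the standard tool (it appears, for instance, in \cite{CWX2020} and is equivalent to the formulations in \cite{HXZ18,MWWZ20}), and your restriction argument is clean: if $\mathcal{T}$ is an inverse-closed right transversal of $H$ in $G$ and $H\le L\le G$, then every right coset of $H$ contained in $L$ meets $\mathcal{T}$ in a point of $L$, so $\mathcal{T}\cap L$ is an inverse-closed right transversal of $H$ in $L$. The converse direction is, as you note, immediate since $G$ is itself one of the subgroups containing $H$. Your side remark that the self-paired coset condition ``$Hg=Hg^{-1}$ implies some $t\in Hg$ with $t^{2}=1$'' is intrinsic to the coset and hence passes to any overgroup is also a valid alternative route, and likely closer in spirit to how \cite{ZZ2021} would phrase it.
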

\begin{lem}[\cite{ZZ2021}]
\label{conjugate}
Let $G$ be a group and $H$ a subgroup of $G$. If $H$ is a perfect code of $G$, then for any $g \in G$, $g^{-1}Hg$ is a perfect code of $G$.  More specifically, if $H$ is a perfect code in $\Cay(G, S)$ for some connection set $S$ of $G$, then $g^{-1}Hg$ is a perfect code in $\Cay(G, g^{-1}Sg)$.
\end{lem}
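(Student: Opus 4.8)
The plan is to exploit the fact that conjugation by a fixed element is a group automorphism, and to show that it induces a graph isomorphism between two Cayley graphs that carries $H$ onto $g^{-1}Hg$. Since the perfect code property is manifestly invariant under graph isomorphisms, this yields both assertions of the lemma at once, the ``more specifically'' statement being the explicit version from which the first assertion is immediate.

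First I would fix a connection set $S$ of $G$ for which $H$ is a perfect code in $\Cay(G,S)$, and check that $T:=g^{-1}Sg$ is again a legitimate connection set, that is, $T^{-1}=T$ and $1\notin T$. The first holds since $T^{-1}=g^{-1}S^{-1}g=g^{-1}Sg=T$ using $S^{-1}=S$, and the second since $1\notin S$ forces $g^{-1}sg\neq 1$ for every $s\in S$. Thus $\Cay(G,g^{-1}Sg)$ is a well-defined Cayley graph of $G$.

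Next I would consider the conjugation map $c_g\colon G\to G$, $x\mapsto g^{-1}xg$, which is a bijection of $G$. For any $x,y\in G$ one computes
\[
c_g(y)\,c_g(x)^{-1}=g^{-1}yg\,g^{-1}x^{-1}g=g^{-1}(yx^{-1})g,
\]
so $c_g(y)c_g(x)^{-1}\in g^{-1}Sg$ if and only if $yx^{-1}\in S$. Hence $x$ and $y$ are adjacent in $\Cay(G,S)$ precisely when $c_g(x)$ and $c_g(y)$ are adjacent in $\Cay(G,g^{-1}Sg)$; in other words, $c_g$ is a graph isomorphism from $\Cay(G,S)$ onto $\Cay(G,g^{-1}Sg)$.

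Finally I would use that any graph isomorphism carries perfect codes to perfect codes, since independence and the ``adjacent to exactly one code vertex'' condition are purely adjacency-theoretic and are therefore transported by $c_g$. As $c_g(H)=g^{-1}Hg$, the image of the perfect code $H$ of $\Cay(G,S)$ is a perfect code of $\Cay(G,g^{-1}Sg)$, giving the explicit claim, and consequently $g^{-1}Hg$ is a perfect code of $G$. There is no substantial obstacle here; the only point requiring care is the bookkeeping with the adjacency convention $yx^{-1}\in S$, which is exactly what forces the conjugated connection set to be $g^{-1}Sg$ rather than $gSg^{-1}$.
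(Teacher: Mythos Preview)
Your argument is correct: conjugation by $g$ is a group automorphism, and you verify directly that it induces a graph isomorphism $\Cay(G,S)\to\Cay(G,g^{-1}Sg)$ carrying $H$ to $g^{-1}Hg$, whence the perfect code property transfers. The paper does not actually supply a proof of this lemma---it is quoted from \cite{ZZ2021}---so there is nothing to compare against here; your proof is the standard one and is exactly what one would expect.
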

The following lemma is one of the main results in \cite{HXZ18}.
\begin{lem}[\cite{HXZ18}]
\label{HXZ}
Let $G$ be a group and $H$ a normal subgroup of $G$. Then $H$ is a perfect code of $G$ if and only if for all $x\in G$, $x^{2}\in H$ implies $(xh)^{2}=1$ for some $h\in H$.
\end{lem}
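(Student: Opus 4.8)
The plan is to translate the statement ``$H$ is a perfect code of $G$'' into an explicit combinatorial description of the admissible connection sets, and then to read off the involution condition from the requirement that such a set be symmetric. First I would record what it means for $H$ to be a perfect code in a fixed Cayley graph $\Cay(G,S)$. Since the neighbours of a vertex $g$ are exactly $Sg$, independence of $H$ amounts to $S\cap H=\emptyset$ (recall $1\notin S$), while for $g\notin H$ the number of neighbours of $g$ lying in $H$ is $|Sg\cap H|=|S\cap Hg^{-1}|$, via the bijection $s\mapsto sg$. Hence $H$ is a perfect code in $\Cay(G,S)$ if and only if $S=S^{-1}$, $S\cap H=\emptyset$, and $S$ meets every non-trivial right coset of $H$ in exactly one element. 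The whole question thus becomes: does there exist a symmetric transversal of the non-trivial right cosets of $H$ that avoids $H$ itself?

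Next I would analyse the symmetry requirement $S=S^{-1}$ through the involution $s\mapsto s^{-1}$, which sends the coset $Hx$ to $Hx^{-1}$; this is well defined on cosets because $H$ is normal. The key elementary observation is that $Hx=Hx^{-1}$ if and only if $x^{2}\in H$. Consequently the non-trivial right cosets split into inverse-pairs $\{Hx,Hx^{-1}\}$ with $x^{2}\notin H$, together with self-paired cosets $Hx$ with $x^{2}\in H$ and $x\notin H$, the latter being precisely the cosets that represent the involutions of the quotient $G/H$. For a genuine pair one may freely pick any $a\in Hx$ and place $a,a^{-1}$ into $S$, automatically respecting symmetry and hitting each of $Hx,Hx^{-1}$ exactly once. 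A self-paired coset, on the other hand, can contribute only one element $s$ to $S$, and symmetry then forces $s=s^{-1}$, i.e. $s$ must be an involution lying in $Hx=xH$; by normality this says exactly that $(xh)^{2}=1$ for some $h\in H$.

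With this dichotomy in hand, both implications fall out. For the forward direction, suppose $H$ is a perfect code via some $S$ and take $x$ with $x^{2}\in H$; the case $x\in H$ is immediate with $h=x^{-1}$, so assume $x\notin H$. Then $Hx$ is a non-trivial self-paired coset, so $S\cap Hx=\{s\}$ for a unique $s$, and since $s^{-1}\in S\cap Hx^{-1}=S\cap Hx$ we get $s^{-1}=s$, an involution $s=xh\in xH$ with $(xh)^{2}=1$. For the converse I would construct $S$ explicitly: choose one inverse-pair $\{a,a^{-1}\}$ for each unordered pair of distinct cosets, and for each self-paired non-trivial coset choose an involution supplied by the hypothesis; the resulting $S$ is symmetric, avoids $H$ and $1$, and is a transversal of the non-trivial right cosets, so $H$ is a perfect code in $\Cay(G,S)$.

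The genuinely delicate point, and the step I expect to be the main obstacle, is the handling of the self-paired cosets: one must see clearly that symmetry collapses the single ``slot'' of such a coset onto its own inverse and therefore demands an honest involution of $G$, and one must check that the assembled selection $S$ is globally consistent, contributing exactly one element to each non-trivial coset and none to $H$. Normality of $H$ enters precisely in ensuring that the pairing $Hx\mapsto Hx^{-1}$ is well defined and that $Hx=xH$, so that the involution condition can be phrased as $(xh)^{2}=1$; the remaining verifications are routine coset bookkeeping.
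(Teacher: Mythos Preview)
Your argument is correct: you have accurately translated the perfect-code condition into the existence of a symmetric transversal of the nontrivial cosets of $H$, correctly identified that normality makes $Hx\mapsto Hx^{-1}$ a well-defined involution on $G/H$, and handled the self-paired cosets properly to extract (and, conversely, to use) the involution condition $(xh)^2=1$. The bookkeeping you flag as ``delicate'' is fine as written.

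As for comparison with the paper: there is nothing to compare. Lemma~\ref{HXZ} is quoted from \cite{HXZ18} and the present paper gives no proof of it; it is used as a black box. Your write-up is essentially the standard argument behind this result (and is in the same spirit as the original proof in \cite{HXZ18}), so it is a welcome self-contained justification, but it goes beyond what the paper itself supplies.
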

The next three lemmas are from \cite{ZZ2022}.
\begin{lem}[\cite{ZZ2022}]
\label{basic}
Let $G$ be a group and $H$ a subgroup of $G$. Then $H$ is not a perfect code of $G$  if and only if there exists a double coset $D = HxH$ with $D=D^{-1}$ having an odd number of left cosets of $H$ in $G$ and containing no involution. In particular, if $H$ is not a perfect code of $G$, then there exists a $2$-element
$x\in G\setminus H$ such that $x^2\in H$, $|H : H\cap xHx^{-1}|$ is odd, and $HxH$ contains no involution.
\end{lem}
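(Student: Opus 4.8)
The plan is to recast the perfect-code property in terms of left transversals and then to reduce the problem to an elementary combinatorial statement inside each self-inverse double coset of $H$. First I would prove the transversal reformulation. Unwinding the definition, $H$ is a perfect code of $G$ precisely when there is a set $S$ with $S=S^{-1}$, $1\notin S$, $S\cap H=\emptyset$ (so that $H$ is independent in $\Cay(G,S)$), and $|S\cap gH|=1$ for every left coset $gH\neq H$ (so that each vertex outside $H$ has a unique neighbour in $H$, its neighbours in $H$ being $\{h\in H:gh^{-1}\in S\}$). Writing $T=S\cup\{1\}$, this says exactly that $T$ is a left transversal of $H$ in $G$ with $T=T^{-1}$. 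Hence $H$ is a perfect code of $G$ if and only if $H$ has an inverse-closed left transversal in $G$.

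Next I would decompose $G$ into $(H,H)$-double cosets. Inversion permutes them, sending $HxH$ to $Hx^{-1}H$ and fixing the identity double coset $H$; note that a left transversal meets $H$ in a single (necessarily self-inverse) element. I would record two facts about a double coset $D=HcH$: (a) inversion maps the set of left cosets inside $D$ bijectively onto the set of right cosets inside $D^{-1}$; (b) every left coset of $D$ meets every right coset of $D$ -- indeed if $a=pcq$ and $b=rcs$ with $p,q,r,s\in H$ then $a^{-1}Hb=q^{-1}(c^{-1}Hc)s$ contains $q^{-1}s\in H$, so $aH\cap Hb\neq\emptyset$. By (b) the incidence between left and right cosets inside $D$ is complete bipartite, so $D$ has a common transversal, that is, a set meeting each left coset and each right coset of $D$ exactly once. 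It follows that a pair $\{D,D^{-1}\}$ of distinct mutually inverse double cosets imposes no obstruction: for a common transversal $U$ of $D$, the set $U\cup U^{-1}$ meets every left (and right) coset of $D\cup D^{-1}$ exactly once and is inverse-closed. So the existence of an inverse-closed left transversal of $H$ reduces to the question whether every self-inverse double coset $D=D^{-1}$ with $D\neq H$ admits an inverse-closed common transversal.

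This self-inverse case is the core of the argument. Fix $D=D^{-1}\neq H$ with $k$ left cosets. By (a), after relabelling, inversion swaps the $i$-th left coset $L_i$ with the $i$-th right coset $R_i$, hence sends the cell $L_i\cap R_j$ to $L_j\cap R_i$. A common transversal of $D$ is a family $s_i\in L_i\cap R_{\rho(i)}$ for some permutation $\rho$ of $\{1,\dots,k\}$; since $s_i^{-1}$ lies in $L_{\rho(i)}\cap R_i$, requiring the family to be inverse-closed then forces $\rho^2=\mathrm{id}$, and for each fixed point $i$ of $\rho$ it forces $s_i$ to be an involution (necessarily lying in $L_i\cap R_i$); such a choice is possible exactly when $L_i$ contains an involution. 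Conversely any such $\rho$ yields an inverse-closed common transversal: on each $2$-cycle of $\rho$ choose one element of the corresponding cell and adjoin its inverse, and on each fixed point choose an involution from the relevant coset (using (b) for non-emptiness). Since a permutation of $\{1,\dots,k\}$ admits a fixed-point-free involution iff $k$ is even, and a forced fixed point can be placed on any left coset of $D$ that contains an involution, I conclude: $D$ admits an inverse-closed common transversal iff $k$ is even or $D$ contains an involution. Reassembling over all double cosets (the identity coset contributing $\{1\}$, mutually inverse pairs handled freely, self-inverse cosets as above) then gives that $H$ is a perfect code of $G$ iff every self-inverse double coset $D\neq H$ has an even number of left cosets or contains an involution, which is the first assertion of the lemma.

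For the ``in particular'' clause, let $D=HxH$ be a bad double coset (self-inverse, involution-free, with an odd number $k$ of left cosets). From $D=D^{-1}$ write $x^{-1}=axb$ with $a,b\in H$; then $y:=xb$ satisfies $y^{-1}=(b^{-1}a)y$, so $y^2=(b^{-1}a)^{-1}\in H$ while $y\in xH\subseteq D$, whence $y\in D\setminus H$. Since $y^2\in H$, every odd power of $y$ lies in $yH\subseteq D$; writing $|y|=2^{t}m$ with $m$ odd, we get $t\geq1$ (otherwise $y\in\langle y^2\rangle\leq H$), and $x':=y^{m}$ is a $2$-element of $D\setminus H$ with $(x')^2=(y^2)^m\in H$. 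As $Hx'H=D$, the index $|H:H\cap x'H(x')^{-1}|$ equals $k$, hence is odd, and $Hx'H$ contains no involution -- which is exactly the clause. I expect the main obstacle to be the self-inverse case above: setting up the cell structure of a double coset correctly, checking that inversion acts on it like a matrix transpose, and verifying that the existence of an inverse-closed common transversal genuinely collapses to the involution statement about $\{1,\dots,k\}$. In particular fact (b), the non-emptiness of every cell, is precisely what legitimizes the pairing on $2$-cycles, and the argument breaks down without it.
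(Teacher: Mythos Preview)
The paper does not prove this lemma: it is quoted as a preliminary result from \cite{ZZ2022} and used as a black box, so there is no ``paper's own proof'' to compare against. That said, your argument is correct and is essentially the approach used in the source papers \cite{CWX2020,ZZ2021,ZZ2022}: recast the perfect-code property as the existence of an inverse-closed left transversal, split $G$ into $(H,H)$-double cosets, dispose of mutually inverse pairs via a common transversal, and reduce the self-inverse case to a parity/involution dichotomy on the cells $L_i\cap R_j$.

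A few small points worth tightening. In the non-self-inverse case you implicitly use that $D$ has the same number of left and right $H$-cosets; this follows from $|D|=|H|\cdot|H:H\cap xHx^{-1}|=|H|\cdot|H:H\cap x^{-1}Hx|$, but it is worth saying. In the self-inverse case, when $\rho(i)=i$ you conclude $s_i^2=1$; since $D\neq H$ forces $s_i\neq 1$, this really is an involution. Conversely, your claim that a fixed point ``can be placed on any left coset of $D$ that contains an involution'' uses that an involution $s\in L_i$ automatically lies in $L_i\cap R_i$ (because $s=s^{-1}\in L_i^{-1}=R_i$); this is true but deserves a sentence. Finally, in the ``in particular'' clause your reduction to a $2$-element is clean; you might note explicitly that $Hx'H=HxH$ because $x'\in xH$, so the odd-index and involution-free conditions transfer verbatim.
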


\begin{lem}[\cite{ZZ2022}]
\label{normal}
Let $G$ be a group and $H$ a subgroup of $G$. Suppose that either $H$ is a $2$-group or at least one of $|H|$ and $|G:H|$ is odd. Then $H$ is a perfect code of $G$ if and only if $H$ is a perfect code of $N_{G}(H)$.
\end{lem}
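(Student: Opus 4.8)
The plan is to dispatch the forward implication with Lemma~\ref{sub} and to treat the converse by contradiction, invoking Lemma~\ref{basic} and then splitting into the three cases permitted by the hypothesis. For the forward implication: if $H$ is a perfect code of $G$ then, since $H\le N_G(H)\le G$, Lemma~\ref{sub} at once gives that $H$ is a perfect code of $N_G(H)$ — no hypothesis needed. For the converse, assume $H$ is a perfect code of $N_G(H)$ but, for a contradiction, not a perfect code of $G$; by the ``in particular'' clause of Lemma~\ref{basic} there is a $2$-element $x\in G\setminus H$ with $x^2\in H$, with $m:=|H:H\cap xHx^{-1}|$ odd, and with $HxH$ containing no involution. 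In each case the aim is either to exhibit a forbidden configuration inside $N_G(H)$ (contradicting the hypothesis) or to show that such an $x$ cannot exist at all.

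If $H$ is a $2$-group, then $m$, being the index in $H$ of the subgroup $H\cap xHx^{-1}$, is a power of $2$, hence $m=1$; so $H=H\cap xHx^{-1}\subseteq xHx^{-1}$ and therefore $x\in N_G(H)$. Working inside $N_G(H)$, the double coset $HxH$ is then the single left coset $Hx$, which is self-inverse (as $x^2\in H$), is a single left coset of $H$ in $N_G(H)$ and hence contains an odd number (namely one) of them, and contains no involution (being a subset of $HxH$); by Lemma~\ref{basic} applied to $(N_G(H),H)$ this makes $H$ not a perfect code of $N_G(H)$, a contradiction. If instead $|H|$ is odd, then $x^2\in H$ has odd order, but as a power of the $2$-element $x$ it also has $2$-power order, so $x^2=1$; as $x\ne 1$, $x$ is then an involution lying in $HxH$, contradicting the choice of $x$. (So when $|H|$ is odd there is no bad $x$ whatsoever, $H$ is automatically a perfect code of $G$, and — by the same argument — of $N_G(H)$, so the equivalence is trivial here.)

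The case $|G:H|$ odd is the substantial one, and the main obstacle lies at its end. First a Sylow reduction: since $m$ is odd, $H\cap xHx^{-1}$ contains a Sylow $2$-subgroup of $H$, which — as $|G:H|$ is odd — is a Sylow $2$-subgroup $P$ of $G$; then $P$ and $x^{-1}Px$ both lie in $H$ and are Sylow $2$-subgroups of it, so Sylow's theorem in $H$ gives $h\in H$ with $x^{-1}Px=h^{-1}Ph$, whence $y:=xh^{-1}\in N_G(P)$ and $HyH=HxH$. As $P$ is a normal Sylow $2$-subgroup of $N_G(P)$, it is the unique one there, so the $2$-part $e$ of $y$ lies in $P\le H$; writing $y=ez$ with $z$ the $2'$-part of $y$ (an element of odd order) gives $HxH=HyH=HzH$ with $z$ of odd order and $z\notin H$ (else $x\in H$). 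One is thereby reduced to ruling out a self-inverse double coset $HzH\ne H$ that has an odd number of left cosets of $H$, is represented by an element of odd order, and contains no involution — equivalently, to proving that a subgroup of odd index is always a perfect code, so that both sides of the stated equivalence hold automatically. I expect this last step to be the crux: the natural attack is a parity analysis of how inversion $g\mapsto g^{-1}$ permutes the left cosets of $H$ inside each self-inverse double coset, using that $|G:H|$ — the total number of left cosets — is odd, though one might instead invoke the (plausibly known) fact that every subgroup of odd index admits an inverse-closed transversal.
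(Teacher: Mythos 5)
This lemma is quoted by the paper from \cite{ZZ2022}, so there is no in-paper proof to compare against; judging your argument on its own terms, it is incomplete. The forward direction via Lemma~\ref{sub} is fine, your $2$-group case is correct and is the genuinely substantive case (odd index forces $|H:H\cap xHx^{-1}|=1$, hence $x\in N_G(H)$ and $HxH=Hx$ is a single, self-inverse, involution-free left coset inside $N_G(H)$, contradicting Lemma~\ref{basic} there), and your $|H|$-odd case is correct. But the $|G:H|$-odd case is left open: after a Sylow reduction you arrive at needing to show that a subgroup of odd index is always a perfect code, and you explicitly say you cannot close this step. That is a genuine gap as written --- the reduction to an odd-order double-coset representative $z$ does not by itself rule the configuration out, and the ``parity analysis of left cosets under inversion'' you sketch does not obviously go through, since inversion sends left cosets to right cosets.

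The irony is that the missing fact is precisely Lemma~\ref{odd} of this paper (from \cite{ZZ2021}): if $|H|$ is odd or $|G:H|$ is odd, then $H$ is a perfect code of $G$. Citing it disposes of your second and third cases in one line --- apply it once in $G$ and once in $N_G(H)$ (noting $|N_G(H):H|$ divides $|G:H|$), so both sides of the claimed equivalence hold automatically and the equivalence is vacuous there; the entire Sylow reduction in your third case is then unnecessary. So the argument is salvageable with a one-line citation you already had available, but as submitted it does not prove the statement, because it defers its hardest case to an unproved assertion.
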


\begin{lem}[\cite{ZZ2022}]
\label{equivalent}
Let $G$ be a group and $H$ a subgroup of $G$. Suppose that either $H$ is a $2$-group or at least one of $|H|$ and $|G:H|$ is odd. Then $H$ is a perfect code of $G$ if and only if for any $x\in N_{G}(H)$, $x^{2}\in H$ implies $(xh)^{2}=1$ for some $h\in H$.
\end{lem}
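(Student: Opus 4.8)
The plan is to derive Lemma \ref{equivalent} by chaining Lemma \ref{normal} with Lemma \ref{HXZ}, exploiting the elementary fact that a subgroup $H$ is automatically normal in its own normalizer $N_{G}(H)$. The whole proof is a two-step composition of results already established in the excerpt.

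First I would invoke Lemma \ref{normal}. The hypothesis of Lemma \ref{equivalent} is word for word the hypothesis of Lemma \ref{normal}, namely that either $H$ is a $2$-group or at least one of $|H|$ and $|G:H|$ is odd. Hence Lemma \ref{normal} applies and yields that $H$ is a perfect code of $G$ if and only if $H$ is a perfect code of $N_{G}(H)$. The purpose of this step is to move the entire question from the ambient group $G$, in which $H$ need not be normal, into the group $N_{G}(H)$, in which $H$ is normal by construction.

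Second I would apply Lemma \ref{HXZ}, but with the ambient group taken to be $N_{G}(H)$ in place of $G$. Since $H$ is normal in $N_{G}(H)$, the hypothesis of Lemma \ref{HXZ} is satisfied, and the lemma gives that $H$ is a perfect code of $N_{G}(H)$ if and only if for every $x \in N_{G}(H)$ with $x^{2} \in H$ there exists $h \in H$ such that $(xh)^{2} = 1$. Composing this equivalence with the one obtained in the first step produces precisely the stated characterization of $H$ being a perfect code of $G$.

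Because the argument is just a direct combination of two prior lemmas, there is no substantial obstacle. The only points demanding care are routine bookkeeping: confirming that the hypothesis of Lemma \ref{equivalent} matches that of Lemma \ref{normal} so the normalizer reduction is legitimate, and checking that when Lemma \ref{HXZ} is applied inside $N_{G}(H)$ its quantifier ranges over exactly the elements $x \in N_{G}(H)$ that appear in the target statement, so that the two conditions coincide verbatim.
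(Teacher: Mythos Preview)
Your proposal is correct: combining Lemma~\ref{normal} (reduction to $N_G(H)$) with Lemma~\ref{HXZ} (the criterion for normal subgroups, applied inside $N_G(H)$) is exactly the intended derivation, and the paper itself does not supply an independent proof here but simply cites \cite{ZZ2022}, where the lemma is established by precisely this chain of reasoning.
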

Note that some corrigenda of the results in \cite{ZZ2021} were published in \cite{ZZ2022} and the statements of following two results from \cite{ZZ2021} listed below are unchanged and correct.
\begin{lem}[\cite{ZZ2021}]
\label{odd}
Let $G$ be a group and $H$ a subgroup of $G$. If either the order of $H$ is odd or the index of $H$ in $G$ is odd, then $H$ is a perfect code of $G$.
\end{lem}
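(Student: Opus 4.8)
The plan is to reduce the statement to a condition on the normaliser $N_G(H)$ and then settle it by an elementary order count. Since the hypothesis here --- that at least one of $|H|$ and $|G:H|$ is odd --- is covered by the hypotheses of Lemma \ref{equivalent}, that lemma reduces the problem to the following: for every $x \in N_G(H)$ with $x^2 \in H$, there is some $h \in H$ with $(xh)^2 = 1$. Fix such an $x$ and write $\bar x = xH$ for its image in $N_G(H)/H$, so that $\bar x^2 = 1$ and hence $\bar x$ has order $1$ or $2$. If $x \in H$, take $h = x^{-1} \in H$; then $(xh)^2 = 1$ and we are done. So we may assume $\bar x$ has order exactly $2$, and it remains to rule this out or exploit it in the two cases.

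If $|G:H|$ is odd, then $|N_G(H)/H|$ divides $|G:H|$, hence is odd, so $N_G(H)/H$ has no element of order $2$ and this case cannot arise. If instead $|H|$ is odd, I work inside the cyclic group $\langle x\rangle$: since $\bar x$ has order $2$, the unique index-$2$ subgroup $\langle x^2\rangle$ of $\langle x\rangle$ equals $\langle x\rangle \cap H$, so $c := |\langle x^2\rangle|$ divides $|H|$ and is therefore odd; thus $|\langle x\rangle| = 2c$ with $c$ odd, and the unique involution of $\langle x\rangle$ is $x^{c}$. Because $c - 1$ is even we have $x^{c-1} \in \langle x^2\rangle \le H$, so putting $h = x^{c-1} \in H$ gives $xh = x^{c}$ and hence $(xh)^2 = 1$, completing the proof.

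I do not expect a genuine obstacle here, since Lemma \ref{equivalent} already does the substantive work of passing to the normaliser; the one point that needs care is verifying, in the odd-$|H|$ case, that the involution $x^{c}$ of $\langle x\rangle$ lies in the coset $xH$ rather than in $H$ --- this is precisely what the parity of $c$, forced by $|H|$ being odd, ensures. (For the case $|H|$ odd there is also a direct route via Lemma \ref{basic}: a $2$-element $x$ with $x^2 \in H$ and $|H|$ odd must satisfy $x^2 = 1$, so $x$ is an involution lying in $HxH$, contradicting that $HxH$ contains no involution.)
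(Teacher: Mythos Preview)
The paper does not give its own proof of this lemma: it is quoted verbatim from \cite{ZZ2021} as a preliminary result, so there is no argument in the present paper to compare yours against.

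Your argument is mathematically correct. The reduction via Lemma~\ref{equivalent} is legitimate under either parity hypothesis, and both cases are handled cleanly: when $|G:H|$ is odd, $|N_G(H):H|$ divides $|G:H|$ so the quotient has no involution; when $|H|$ is odd, your computation inside the cyclic group $\langle x\rangle$ is sound (the key point that $\langle x\rangle\cap H=\langle x^2\rangle$ follows because $\langle x^2\rangle$ is the \emph{unique} maximal subgroup of $\langle x\rangle$ once $x$ has even order, and $x$ must have even order since otherwise $x\in\langle x^2\rangle\le H$). Your parenthetical alternative via Lemma~\ref{basic} for the odd-$|H|$ case is also valid and arguably more direct.

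The one caveat worth flagging is logical rather than mathematical: Lemma~\ref{equivalent} comes from \cite{ZZ2022}, the corrigendum to \cite{ZZ2021}, while Lemma~\ref{odd} is one of the results from \cite{ZZ2021} that survived the corrigendum unchanged. So in the original literature Lemma~\ref{odd} predates Lemma~\ref{equivalent}, and one should check that the proof of Lemma~\ref{equivalent} there does not itself invoke Lemma~\ref{odd}. Within the self-contained framework of this paper, where both are taken as black boxes, your derivation is fine; but if you were reconstructing the theory from scratch, the route through Lemma~\ref{basic} (which you sketch for one case) would be the safer foundation.
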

\begin{lem}[\cite{ZZ2021}]
\label{ns}
Let $G$ be a group and $H$ a subgroup of $G$. If there exists a Sylow $2$-subgroup of $H$ which is a perfect code of $G$, then $H$ is a perfect code of $G$.
\end{lem}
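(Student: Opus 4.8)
The plan is to prove the contrapositive: assuming $H$ is \emph{not} a perfect code of $G$, I will exhibit a Sylow $2$-subgroup of $H$ that is not a perfect code of $G$; since the Sylow $2$-subgroups of $H$ form one conjugacy class in $G$, Lemma~\ref{conjugate} then shows none of them is a perfect code of $G$, which is the contrapositive of the claim. (If $|H|$ or $|G:H|$ is odd then $H$ is already a perfect code of $G$ by Lemma~\ref{odd}, so assume both are even.) By Lemma~\ref{basic} there is a $2$-element $x\in G\setminus H$ with $x^{2}\in H$, with $|H:H\cap xHx^{-1}|$ odd, and with $HxH$ containing no involution; note $x^{2}\neq 1$, since otherwise $x$ is an involution in $HxH$. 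Because $x^{2}\in H$ is equivalent to $x^{2}\in xHx^{-1}$, we have $x^{2}\in K:=H\cap xHx^{-1}$, and as $|H:K|$ is odd a Sylow $2$-subgroup of $K$ is a Sylow $2$-subgroup of $H$; fix such a subgroup $Q_{0}$ with $x^{2}\in Q_{0}$. From $Q_{0}\le xHx^{-1}$ we get $x^{-1}Q_{0}x\le H$, again a Sylow $2$-subgroup of $H$, so Sylow's theorem inside $H$ gives $h\in H$ with $h(x^{-1}Q_{0}x)h^{-1}=Q_{0}$; equivalently $y:=xh^{-1}$ lies in $N_{G}(Q_{0})$. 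Now $y\in xH\subseteq HxH$, so $y\notin H$ (hence $y\notin Q_{0}$) and $yQ_{0}\subseteq HxH$ contains no involution; and since $y\in N_{G}(Q_{0})$ the double coset $Q_{0}yQ_{0}$ equals the single left coset $yQ_{0}$, which is just one left coset of $Q_{0}$. By Lemma~\ref{basic} applied to $Q_{0}\le G$, to conclude that $Q_{0}$ is not a perfect code of $G$ it therefore remains only to check that $Q_{0}yQ_{0}$ is self-inverse, i.e.\ that $y^{2}\in Q_{0}$.

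The crux is exactly this condition $y^{2}\in Q_{0}$: although $y$ normalizes $Q_{0}$ and $x^{2}\in Q_{0}$, it is not automatic that $y^{2}=(xh^{-1})^{2}$ lands in $Q_{0}$, since $h$ is determined only modulo $N_{H}(Q_{0})$, and replacing $y$ by another element of $yQ_{0}$ does not change whether its square is in $Q_{0}$, so a genuinely different element must be produced. I expect this to be the main obstacle. One clean way out is available when $Q_{0}$ is normal in $G$ (in particular whenever $N_{G}(Q_{0})=G$): pass to $\bar G=G/Q_{0}$, where $\bar H=H/Q_{0}$ has odd order and hence is a perfect code of $\bar G$ by Lemma~\ref{odd}; the image of the self-inverse double coset $HxH$ still has an odd number of left cosets of $\bar H$ and $\bar x\notin\bar H$, so by Lemma~\ref{basic} it contains an involution $\bar g$. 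Any preimage $g\in HxH$ of $\bar g$ then satisfies $g^{2}\in Q_{0}$ and $g\notin Q_{0}$, and since $g^{2}$ is a $2$-element, $g$ is itself a $2$-element; as $Q_{0}\trianglelefteq G$ forces $g\in N_{G}(Q_{0})$, the double coset $Q_{0}gQ_{0}=gQ_{0}$ is a self-inverse, involution-free single coset, so $Q_{0}$ is not a perfect code of $G$.

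For the general case I would run an induction on $|G|$. Using Lemma~\ref{sub} one may first replace $G$ by $\langle H,x\rangle$, and using Lemma~\ref{normal} (applicable since $Q_{0}$ is a $2$-group) one may pass from $G$ to $N_{G}(Q_{0})$, to which the inductive hypothesis applies with $N_{H}(Q_{0})$ in place of $H$; the remaining task is to descend such information back to $H$ in $G$, and to organize the choice of the Sylow $2$-subgroup $Q_{0}$ and of the conjugating element $h$ so that the needed element of $N_{G}(Q_{0})$ squaring into $Q_{0}$ is actually obtained (the normal case above being the model). Once that element is in hand, Lemma~\ref{basic} immediately gives that a Sylow $2$-subgroup of $H$ is not a perfect code of $G$, completing the contrapositive and hence the proof.
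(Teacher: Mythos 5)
This lemma is imported from \cite{ZZ2021}, so the paper itself contains no proof to compare with; judged on its own terms, your proposal has a genuine gap at precisely the point you flag. The setup is fine: Lemma \ref{basic} yields a $2$-element $x\in G\setminus H$ with $x^{2}\in H$, $|H:H\cap xHx^{-1}|$ odd and $HxH$ involution-free; $x^{2}$ lies in a Sylow $2$-subgroup $Q_{0}$ of $H\cap xHx^{-1}$ which is also Sylow in $H$; and $y=xh^{-1}$ normalizes $Q_{0}$ with $yQ_{0}\subseteq HxH$ involution-free. But to apply Lemma \ref{basic} (or Lemma \ref{equivalent}) to $Q_{0}$ you must produce an element of $N_{G}(Q_{0})\setminus Q_{0}$ whose \emph{square} lies in $Q_{0}$ and whose coset of $Q_{0}$ avoids involutions, and your $y$ is not shown to satisfy $y^{2}\in Q_{0}$. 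As you observe, $h$ is determined only modulo $N_{H}(Q_{0})$, and adjusting $y$ within $yQ_{0}$ cannot change whether its square lies in $Q_{0}$; moreover it is not even clear that a suitable witness must lie inside $HxH$ at all. This missing step is the entire content of the lemma, and it is left unresolved.

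Neither of your two remedies closes the gap. The case $Q_{0}\trianglelefteq G$ is argued correctly (the quotient argument through Lemma \ref{odd} and Lemma \ref{basic} in $G/Q_{0}$ works), but that is a very special hypothesis. The ``induction on $|G|$'' is only a plan: after reducing to $N_{G}(Q_{0})$ via Lemma \ref{normal}, you would want to invoke the inductive hypothesis for the pair $(N_{G}(Q_{0}),N_{H}(Q_{0}))$, but the hypothesis you would need --- that $N_{H}(Q_{0})$ is not a perfect code of $N_{G}(Q_{0})$ --- has not been derived from ``$H$ is not a perfect code of $G$'', and no mechanism for transferring the conclusion back to $H$ in $G$ is described. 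So the proposal correctly isolates where the difficulty lies but proves the lemma only when a Sylow $2$-subgroup of $H$ is normal in $G$.
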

 The following lemma is a part of the famous Sylow's Theorem (see \cite[3.2.3]{KS2004}).
 \begin{lem}
\label{sylow}
Let $p$ be a prime divisor of $|G|$. Then every $p$-subgroup of $G$ is contained in a Sylow $p$-subgroup of $G$ and all Sylow $p$-subgroups of $G$ are conjugate in $G$.
\end{lem}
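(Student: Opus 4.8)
The plan is to deduce both assertions from the action of a $p$-subgroup on the coset space of a fixed Sylow $p$-subgroup, combined with the elementary fixed-point count for $p$-groups. First I would record (or simply invoke) the existence of at least one Sylow $p$-subgroup $P$ of $G$; writing $|G|=p^{a}m$ with $p\nmid m$, this can be obtained by induction on $|G|$, distinguishing the cases $p\mid|Z(G)|$ and $p\nmid|Z(G)|$ and applying the class equation in the latter, or alternatively via Wielandt's argument counting the $p^{a}$-element subsets of $G$. This existence statement is precisely what makes the lemma non-vacuous, and it is the ingredient I expect to need the most care to present cleanly in a self-contained way.

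The key tool is the following: if a finite $p$-group $Q$ acts on a finite set $\Omega$, then $|\Omega|\equiv|\Fix_{Q}(\Omega)|\pmod p$, where $\Fix_{Q}(\Omega)$ denotes the set of points fixed by every element of $Q$. This is immediate from the orbit decomposition of $\Omega$ together with the orbit–stabilizer theorem, since every orbit of size greater than $1$ has size a positive power of $p$.

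With these in place, let $Q$ be an arbitrary $p$-subgroup of $G$ and let $P$ be a Sylow $p$-subgroup of $G$. Let $Q$ act by left multiplication on $\Omega=\{gP\mid g\in G\}$. Since $|\Omega|=|G:P|=m$ is coprime to $p$, the fixed-point congruence yields $|\Fix_{Q}(\Omega)|\equiv m\not\equiv 0\pmod p$, so some coset $gP$ is fixed by all of $Q$; that is, $qgP=gP$ for every $q\in Q$, which says $g^{-1}Qg\subseteq P$, i.e. $Q\subseteq gPg^{-1}$. As $gPg^{-1}$ is again a Sylow $p$-subgroup of $G$, this proves that every $p$-subgroup of $G$ lies in some Sylow $p$-subgroup. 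Finally, if $Q$ is itself a Sylow $p$-subgroup then $|Q|=|gPg^{-1}|$ together with $Q\subseteq gPg^{-1}$ forces $Q=gPg^{-1}$, so any two Sylow $p$-subgroups are conjugate in $G$. I expect the orbit bookkeeping to be entirely routine, and the only real obstacle to be a fully self-contained proof of existence — which, since the lemma is quoted from \cite{KS2004}, one is in any case entitled to take as known.
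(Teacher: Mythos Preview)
Your argument is correct and is the standard textbook proof via the action of a $p$-subgroup on the left cosets of a fixed Sylow $p$-subgroup together with the fixed-point congruence for $p$-group actions. The paper, however, does not prove this lemma at all: it is stated without proof as a part of Sylow's Theorem, with a reference to \cite[3.2.3]{KS2004}. So there is nothing to compare beyond noting that you have supplied a complete (and entirely acceptable) proof where the paper simply cites the literature.
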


\section{Main results}
\label{sec:main}
In this Section, we present the proofs of Theorems \ref{sl2} and \ref{NGQ}. We also deduce a few Corollaries of Theorem \ref{NGQ}.

\medskip
Let us prove Theorem \ref{sl2} first.
\begin{proof}[Proof of Theorem \ref{sl2}]
It is obvious that (ii) is true if (i) is true. Lemma \ref{ns} implies (ii)$\Rightarrow$(iii). In what follows we deduce (iii)$\Rightarrow$(i).

Suppose that $H$ is a perfect code of $G$. Let $Q$ be an arbitrary Sylow 2-subgroup of $H$. It suffices to show that $Q$ is a perfect code of $G$. If $|N_{G}(Q):Q|$ is odd, then Lemma \ref{odd} implies that $Q$ is a perfect code of $N_{G}(Q)$ and it follows from Lemma \ref{normal} that $Q$ is a perfect code of $G$ . In what follows, we assume that  $|N_{G}(Q):Q|$ is even.
Consider an arbitrary element $x\in N_{G}(Q)\setminus Q$ with $x^{2}\in Q$. Set $P=\langle Q,x\rangle$. Then $Q$ is a normal subgroup of $P$ of index $2$. In particular, $P=Q\cup xQ$. Since $Q$ is a Sylow 2-subgroup of $H$, we get $x\notin H$ and $x^2\in H$. Therefore $(HxH)^{-1}=HxH$. Furthermore, $|H:H\cap xHx^{-1}|$ is odd as $Q=xQx^{-1}\subseteq H\cap xHx^{-1}$. It follows that $HxH$ is a union of an odd number of left cosets of $H$ in $G$. Since $H$ is a perfect code of $G$, it follows from Lemma \ref{basic} that $HxH$ contains an involution, say $h_1xh_2$. Let $h=h_2h_1$. Since $xh=h_1^{-1}(h_1xh_2)h_1$, $xh$ is an involution in $xH$. Set $L=\langle Q,x,h\rangle$. Since $xhx^{-1}=(xh)^2h^{-1}x^{-2}=h^{-1}x^{-2}\in \langle Q,h\rangle$ and $xQx^{-1}=Q$, we conclude that $\langle Q,h\rangle$ is a normal subgroup of $L$. Since $x\notin\langle Q,h\rangle$ and $x^2\in\langle Q,h\rangle$, we obtain that $\langle Q,h\rangle$ is of index $2$ in $L$. Therefore $|L|/|P|=|\langle Q,h\rangle|/|Q|$. Since $Q$ is a Sylow $2$-subgroup of $H$ and $\langle Q,h\rangle$ is a subgroup of $H$ containing $Q$, we have that $|\langle Q,h\rangle|/|Q|$ is odd. Therefore $|L|/|P|$ is odd and it follows that $P$ is a Sylow $2$-subgroup of $L$. Since $xh$ is an involution contained in $L$, there exists a Sylow $2$-subgroup $R$ of $L$ containing $xh$. By Lemma \ref{sylow}, the Sylow $2$-subgroups of $L$ are conjugate in $L$. Therefore there exists an element $b\in L$ such that $bRb^{-1}=P$. It follows that $bxhb^{-1}\in P$. Since $x\notin H$ and $b,h\in H$, we get $bxhb^{-1}\notin H$. Thus $bxhb^{-1}\notin Q$. It follows that $bxhb^{-1}\in xQ$ as $P=Q\cup xQ$ and $bxhb^{-1}\in P$. Since $xh$ is an involution, we have that $bxhb^{-1}$ is an involution. Now we have proved that $xQ$ contains an involution. By Corollary \ref{equivalent}, $Q$ is a perfect code of $G$.
\end{proof}

Now we prove Theorem \ref{NGQ}.
\begin{proof}[Proof of Theorem \ref{NGQ}]
$\Rightarrow$) Suppose that $H$ is a perfect code of $G$. By Theorem \ref{sl2}, we have that $Q$ is a perfect code of $G$ and it follows from Lemma \ref{sub} that $Q$ is a perfect code of $P$.

$\Leftarrow$) Suppose that $Q$ is a perfect code of $P$.
Consider an arbitrary element $x\in N_{G}(Q)$ with $x^{2}\in Q$. Then $\langle Q,x\rangle$ is a $2$-subgroup of $N_{G}(Q)$. Since $P$ is a Sylow $2$-subgroup of $N_{G}(Q)$, it follows from Lemma \ref{sylow} that $\langle Q,x\rangle$ is contained in $b^{-1}Pb$ for some $b\in N_{G}(Q)$. Thus $bxb^{-1}\in P$ and $(bxb^{-1})^{2}\in Q$.
Since $Q$ is normal in $P$ and a perfect code of $P$, by Lemma \ref{HXZ} we obtain $(bxb^{-1}c)^{2}=1$ for some $c\in Q$. It follows that $b^{-1}cb\in Q$ and $(xb^{-1}cb)^{2}=1$. By Lemma \ref{equivalent}, we have that $Q$ is a perfect code of $G$. Recall that $Q$ is a Sylow $2$-subgroup of $H$. By Theorem \ref{sl2},  $H$ is a perfect code of $G$.
\end{proof}
In what follows, we give three corollaries of Theorem \ref{NGQ}. These corollaries involve special groups $G$ or special subgroups $H$.
\begin{coro}
\label{qnormal}
Let $G$ be a group and $H$ a subgroup of $G$. Let $Q$ be a Sylow $2$-subgroup of $H$ and $P$ a Sylow $2$-subgroup of $G$ containing $Q$. If $P\cap N_{G}(Q)$ is of odd index in $N_{G}(Q)$, then $H$ is a perfect code of $G$ if and only if $Q$ is a perfect code of $P$.
\end{coro}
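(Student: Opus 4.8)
The plan is to obtain this as a quick consequence of Theorem~\ref{NGQ} together with Lemma~\ref{normal}, with no new ideas needed beyond a bit of bookkeeping about Sylow subgroups and normalizers. The first step is to produce a Sylow $2$-subgroup of $N_{G}(Q)$ sitting inside $P$. Since $P$ is a $2$-group, the intersection $P\cap N_{G}(Q)$ is a $2$-subgroup of $N_{G}(Q)$; by hypothesis it has odd index in $N_{G}(Q)$, so $P\cap N_{G}(Q)$ is in fact a Sylow $2$-subgroup of $N_{G}(Q)$. I would also record the elementary identity $P\cap N_{G}(Q)=N_{P}(Q)$ and the containments $Q\le N_{P}(Q)\le P$, which will be used in the last step.

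Next I would apply Theorem~\ref{NGQ}, taking the Sylow $2$-subgroup of $N_{G}(Q)$ there to be $P\cap N_{G}(Q)$ (legitimate by the previous paragraph). This gives at once that $H$ is a perfect code of $G$ if and only if $Q$ is a perfect code of $P\cap N_{G}(Q)$.

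It then remains to replace $P\cap N_{G}(Q)$ by $P$ on the right-hand side. Since $Q$ is a $2$-group, Lemma~\ref{normal} applies to the subgroup $Q$ of the group $P$ and yields that $Q$ is a perfect code of $P$ if and only if $Q$ is a perfect code of $N_{P}(Q)$; and $N_{P}(Q)=P\cap N_{G}(Q)$. Chaining this equivalence with the one from Theorem~\ref{NGQ} gives the corollary.

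There is essentially no genuine obstacle here; the proof is purely formal. The only two points that require (minor) care are that the odd-index hypothesis is exactly what makes $P\cap N_{G}(Q)$ a true Sylow $2$-subgroup of $N_{G}(Q)$, so that Theorem~\ref{NGQ} can be invoked with this choice, and the identification $P\cap N_{G}(Q)=N_{P}(Q)$, which is what lets Lemma~\ref{normal} bridge between "perfect code of $P$" and "perfect code of $P\cap N_{G}(Q)$''.
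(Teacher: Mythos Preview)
Your proof is correct and follows essentially the same route as the paper's: identify $P\cap N_{G}(Q)=N_{P}(Q)$ as a Sylow $2$-subgroup of $N_{G}(Q)$ via the odd-index hypothesis, apply Theorem~\ref{NGQ} with this choice, and use Lemma~\ref{normal} (for the $2$-group $Q$ inside $P$) to pass between $N_{P}(Q)$ and $P$. The only difference is the order in which you invoke Theorem~\ref{NGQ} and Lemma~\ref{normal}, which is immaterial.
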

\begin{proof}
Note that $N_{P}(Q)=P\cap N_{G}(Q)$. Since $P\cap N_{G}(Q)$  is of odd index in $N_{G}(Q)$, we have that
$N_{P}(Q)$ is a Sylow $2$-subgroup of $N_{G}(Q)$. By Lemma \ref{normal}, $Q$ is a perfect code of $P$ if and only if $Q$ is a perfect code of $N_{P}(Q)$. Together with Theorem \ref{NGQ}, we conclude that $H$ is a perfect code of $G$ if and only if $Q$ is a perfect code of $P$.
\end{proof}
\begin{coro}
\label{ns2}
Let $G$ be a group having an normal Sylow $2$-subgroup $P$. Let $H$ be a subgroup of $G$ and $Q$ a Sylow $2$-subgroup of $H$. Then $H$ is a perfect code of $G$ if and only if $Q$ is a perfect code of $P$.
 \end{coro}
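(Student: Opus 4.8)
The plan is to derive this as a special case of Theorem \ref{NGQ}, so the only thing to check is that the hypotheses of Theorem \ref{NGQ} specialize correctly when $G$ has a normal Sylow $2$-subgroup $P$. Let $Q$ be a Sylow $2$-subgroup of $H$ as in the statement. Since $Q$ is a $2$-subgroup of $G$ and $P$ is a normal Sylow $2$-subgroup of $G$, Lemma \ref{sylow} forces $Q \le P$; in fact $P$ is the \emph{unique} Sylow $2$-subgroup of $G$. The key point is to identify a Sylow $2$-subgroup of $N_G(Q)$. Because $P \trianglelefteq G$ we have $P \cap N_G(Q) \trianglelefteq N_G(Q)$, and $P \cap N_G(Q) = N_P(Q)$ is a $2$-group; moreover its index $|N_G(Q) : N_P(Q)|$ divides $|G : P|$, which is odd, so $N_P(Q)$ is a Sylow $2$-subgroup of $N_G(Q)$.

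With this in hand I would apply Theorem \ref{NGQ} with the Sylow $2$-subgroup of $N_G(Q)$ taken to be $N_P(Q)$: it gives that $H$ is a perfect code of $G$ if and only if $Q$ is a perfect code of $N_P(Q)$. It then remains to replace $N_P(Q)$ by $P$ in this statement. Since $Q$ is a $2$-group, Lemma \ref{normal} (applied inside the group $P$, with subgroup $Q$ whose normalizer in $P$ is $N_P(Q)$) tells us that $Q$ is a perfect code of $P$ if and only if $Q$ is a perfect code of $N_P(Q)$. Combining the two equivalences yields that $H$ is a perfect code of $G$ if and only if $Q$ is a perfect code of $P$, as desired. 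Alternatively, one could observe that $N_P(Q)$ has odd index in $N_G(Q)$ and invoke Corollary \ref{qnormal} directly, since that corollary's hypothesis ``$P \cap N_G(Q)$ is of odd index in $N_G(Q)$'' is exactly what we verified above.

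I do not anticipate a serious obstacle here: the whole content is the bookkeeping that $N_P(Q)$ is simultaneously (a) a Sylow $2$-subgroup of $N_G(Q)$, which lets Theorem \ref{NGQ} apply, and (b) the full normalizer $N_P(Q)$, which lets Lemma \ref{normal} bridge between $P$ and $N_P(Q)$. The mild subtlety worth stating cleanly is the index computation $|N_G(Q):N_P(Q)| \mid |G:P|$, which follows because $N_P(Q) = P \cap N_G(Q)$ and the inclusion $N_G(Q)/(P\cap N_G(Q)) \hookrightarrow G/P$ (using $P \trianglelefteq G$). Once that is noted, the proof is three lines of citation.
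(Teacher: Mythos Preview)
Your proposal is correct and follows essentially the same route as the paper: identify $N_P(Q)=P\cap N_G(Q)$ as a Sylow $2$-subgroup of $N_G(Q)$, apply Theorem~\ref{NGQ}, and then use Lemma~\ref{normal} to pass from $N_P(Q)$ to $P$. The only cosmetic difference is that the paper argues $N_P(Q)$ is the \emph{unique} Sylow $2$-subgroup of $N_G(Q)$ (since every $2$-subgroup of $G$ lies in $P$), whereas you use the index divisibility $|N_G(Q):N_P(Q)|\mid |G:P|$; both are fine.
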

\begin{proof}
Since the Sylow $2$-subgroup $P$ is normal in $G$, we have that $P$ is the unique Sylow $2$-subgroup of $G$. Therefore every Sylow $2$-subgroup of $N_{G}(Q)$ is contained in $P$. This implies that $Q$ is a subgroup of $P$ and
$N_{P}(Q)$ is the unique Sylow $2$-subgroup of $N_{G}(Q)$. By Lemma \ref{normal}, $Q$ is a perfect code of $P$ if and only if $Q$ is a perfect code of $N_{P}(Q)$. It follows from Theorem \ref{NGQ} that $H$ is a perfect code of $G$ if and only if $Q$ is a perfect code of $P$.
\end{proof}

\begin{coro}
\label{abelianq}
Let $G$ be a group having an abelian Sylow $2$-subgroup and $H$ a subgroup of $G$. Let $Q$ be a Sylow $2$-subgroup of $H$ and $P$ a Sylow $2$-subgroup of $G$ containing $Q$. Then $H$ is a perfect code of $G$ if and only if $Q$ is a perfect code of $P$.
 \end{coro}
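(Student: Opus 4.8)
The plan is to reduce this immediately to Theorem \ref{NGQ} by showing that the given Sylow $2$-subgroup $P$ of $G$ is in fact also a Sylow $2$-subgroup of $N_{G}(Q)$. The point is that abelianness of $P$ is exactly what makes this work: since $Q\leq P$ and $P$ is abelian, every element of $P$ commutes with every element of $Q$, so $P\subseteq C_{G}(Q)\subseteq N_{G}(Q)$.

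From here the counting is routine. Because $P$ is a Sylow $2$-subgroup of $G$, the order $|P|$ equals the full $2$-part of $|G|$; on the other hand $P\leq N_{G}(Q)\leq G$, so $|P|$ divides $|N_{G}(Q)|$ and $|N_{G}(Q)|$ divides $|G|$. Hence $|P|$ is exactly the $2$-part of $|N_{G}(Q)|$, which says precisely that $P$ is a Sylow $2$-subgroup of $N_{G}(Q)$. Now apply Theorem \ref{NGQ} with this very $P$ in the role of "a Sylow $2$-subgroup of $N_{G}(Q)$": $H$ is a perfect code of $G$ if and only if $Q$ is a perfect code of $P$, which is the claim.

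I do not expect any real obstacle here; the only thing to be careful about is bookkeeping with the two possible meanings of $P$ (Sylow in $G$ versus Sylow in $N_{G}(Q)$) and making explicit that the hypothesis on $P$ containing $Q$ is used to invoke $P\subseteq C_{G}(Q)$. Alternatively, one can phrase the argument through Corollary \ref{qnormal} instead of Theorem \ref{NGQ} directly: with $P$ abelian one has $N_{P}(Q)=P$ and $P\cap N_{G}(Q)=P$, so $P\cap N_{G}(Q)$ has odd index in $N_{G}(Q)$ (indeed it equals a Sylow $2$-subgroup of $N_{G}(Q)$), and Corollary \ref{qnormal} yields the conclusion. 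Either route is a two- or three-line proof.
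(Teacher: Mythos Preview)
Your proposal is correct and matches the paper's proof essentially line for line: the paper also observes that $P$ is abelian (since all Sylow $2$-subgroups of $G$ are conjugate), hence $Q\trianglelefteq P$, hence $P$ is a Sylow $2$-subgroup of $N_{G}(Q)$, and then invokes Theorem \ref{NGQ}. The only cosmetic point is to make explicit that ``$G$ has an abelian Sylow $2$-subgroup'' transfers to $P$ via conjugacy, which you implicitly assume.
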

\begin{proof}
By Lemma \ref{sylow}, all Sylow $2$-subgroup of $G$ are conjugate in $G$. Since $G$ has an abelian Sylow $2$-subgroup and $P$ is a Sylow $2$-subgroup of $G$, we have that $P$ is abelian and therefore $Q$ is normal in $P$. Thus $P$ is a Sylow $2$-subgroup of $N_{G}(Q)$. By Theorem \ref{NGQ}, we obtain that $H$ is a perfect code of $G$ if and only if $Q$ is a perfect code of $P$.
\end{proof}
As a contrast to Corollary \ref{abelianq}, we introduce the following proposition of which the statement is an equivalent expression of a theorem \cite[Theorem 1.1]{MWWZ20} of Ma et al. Note that a $2$-group has no element of order $4$ if and only if it is elementary abelian.
\begin{pro}
\label{ele}
Let $G$ be a group. Then every subgroup of $G$ is a perfect code if and only if $G$ has an elementary abelian Sylow $2$-subgroup.
 \end{pro}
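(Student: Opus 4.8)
The plan is to exploit Theorem \ref{sl2}, which reduces the perfect code property of an arbitrary subgroup to that of its Sylow $2$-subgroup, together with the standard fact that a finite $2$-group has no element of order $4$ precisely when it is elementary abelian. I would argue both directions by combining these two observations.

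For the forward direction, suppose every subgroup of $G$ is a perfect code of $G$. Let $P$ be a Sylow $2$-subgroup of $G$ and take any $x \in P$; I want to show $x^2 = 1$. Consider the cyclic subgroup $\langle x^2 \rangle$. By hypothesis $\langle x^2 \rangle$ is a perfect code of $G$, and it is a $2$-group, so by Theorem \ref{sl2} (or directly, since it equals its own Sylow $2$-subgroup) and Lemma \ref{equivalent} applied within an appropriate overgroup, the element $x \in N_G(\langle x^2\rangle)$ satisfying $x^2 \in \langle x^2 \rangle$ forces $(xh)^2 = 1$ for some $h \in \langle x^2 \rangle$. Writing $h = x^{2k}$ gives $(x^{2k+1})^2 = 1$, i.e.\ $x^{4k+2} = 1$, so the order of $x$ divides $4k+2$, hence is not divisible by $4$. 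Since $x$ lies in a $2$-group, $x$ has order $1$ or $2$. Thus $P$ has exponent at most $2$, so $P$ is elementary abelian.

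For the converse, suppose a Sylow $2$-subgroup $P$ of $G$ is elementary abelian, and let $H$ be any subgroup of $G$. Let $Q$ be a Sylow $2$-subgroup of $H$; then $Q$ is a $2$-subgroup of $G$, hence (after conjugating, using Lemma \ref{sylow} and Lemma \ref{conjugate}, which does not affect the perfect code property) contained in $P$. Since $P$ is elementary abelian, $Q$ is normal in $P$ and $P \le N_G(Q)$, so $P$ is itself a Sylow $2$-subgroup of $N_G(Q)$. By Theorem \ref{NGQ}, $H$ is a perfect code of $G$ if and only if $Q$ is a perfect code of $P$. But any subgroup of an abelian group is normal, and for the abelian (indeed elementary abelian) group $P$ one checks via Lemma \ref{HXZ} that $Q$ is a perfect code of $P$: if $x \in P$ with $x^2 \in Q$, then already $x^2 = 1$ since $P$ has exponent $2$, so taking $h = 1$ gives $(xh)^2 = 1$. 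Hence $Q$ is a perfect code of $P$, and therefore $H$ is a perfect code of $G$.

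The only subtle point is the bookkeeping in the forward direction: one must be careful that Lemma \ref{equivalent} is applicable, which requires either that the relevant subgroup is a $2$-group or that one of its order and index is odd — here $\langle x^2 \rangle$ is a $2$-group, so the hypothesis is met, and the argument goes through cleanly. I do not expect a genuine obstacle; the statement is essentially a direct corollary of Theorems \ref{sl2} and \ref{NGQ} once the elementary-abelian/exponent-$2$ translation is in hand, which is why it is recorded as a proposition rather than a theorem.
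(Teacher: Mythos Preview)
Your proof is correct and follows essentially the same approach as the paper: both directions hinge on applying Lemma~\ref{equivalent} (or Lemma~\ref{HXZ}) to the cyclic subgroup $\langle x^2\rangle$ for the forward implication, and on invoking Theorem~\ref{NGQ} with an elementary abelian Sylow $2$-subgroup of $N_G(Q)$ for the converse. The only cosmetic differences are that the paper phrases the forward direction as a contrapositive (exhibiting an element $z$ of order $4$ and showing $\langle z^2\rangle$ fails to be a perfect code) and, for the converse, takes $P$ directly as a Sylow $2$-subgroup of $N_G(Q)$ rather than of $G$, thereby avoiding the conjugation detour.
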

\begin{proof}
$\Rightarrow$) If $G$ has a Sylow $2$-subgroup $P$ which is not an elementary abelian group, then $P$ contains an element $z$ of order $4$. In particular, $z\in N_{G}(\langle z^2\rangle)$ and $z\langle z^2\rangle$ contain no involution. By Lemma \ref{equivalent}, $\langle z^2\rangle$ is not a perfect code of $G$. Therefore, if every subgroup of $G$ is a perfect code, then every Sylow $2$-subgroup of $G$ is elementary abelian.

$\Leftarrow$)
If $G$ has an elementary abelian Sylow $2$-subgroup, then it follows from Lemma \ref{sylow} that every Sylow $2$-subgroup of $G$ is elementary abelian. Let $H$ be a subgroup of $G$. Let $Q$ be a Sylow $2$-subgroup of $H$ and $P$ a Sylow $2$-subgroup of $N_G(Q)$. Then $P$ is an elementary abelian group containing $Q$ and therefore $Q$ is a perfect code of $P$. By Theorem \ref{NGQ}, we obtain that $H$ is a perfect code of $G$.
\end{proof}

Let $Q$ be a Sylow $2$-subgroup of $H$ which is contained in a Sylow $2$-subgroup $P$ of $G$. In general, $Q$ being a perfect code of $P$ does not guarantee that $H$ is a perfect code of $G$. See the following example.
\begin{exam}
Let $G=S_6$, $H=\langle (12)(35),(345)\rangle$, $P=\langle (12),(35),(3456)\rangle$ and $Q=\langle (12)(35)\rangle$. Then $Q$ is a Sylow $2$-subgroup of $H$ and $P$ is a Sylow $2$-subgroup of $G$ containing $Q$. It is obvious that $(1325)\in N_{G}(Q)$, $(1325)^2\in Q$ and $(1325)Q=\{(1325),(1523)\}$. Note that $(1325)Q$ contains no involution. By Lemma \ref{equivalent}, $Q$ is not a perfect code of $G$. It follows from Theorem \ref{sl2} that $H$ is not a perfect code of $G$. However, $Q$ is a perfect code of $P$ as $Q$ has a complement $\langle (12),(3456)\rangle$ in $P$.
\end{exam}

\section{Subgroup perfect codes of $\PSL(2,q)$}
\label{sec:psl}
Throughout this section, we assume that $q$ is a prime power. Let $d=1$ if $q$ is even and $d=2$ if $q$ is odd. Recall that $|\PSL(2,q)|=\frac{1}{d}q(q-1)(q+1)$.
Note that all subgroups of $\PSL(2,q)$ were first known by Dickson in \cite{D1901}. The main results of this section can be used to check whether a given subgroup of $\PSL(2,q)$ is perfect code. We will use the following result of Dickson which gives a classification of maximal subgroups of $\PSL(2,q)$.
\begin{lem}[\cite{D1901,Su}]
\label{max}
 A maximal subgroup of $\PSL(2,q)$ is isomorphic to one of the following groups:
\begin{enumerate}
  \item the dihedral group of order $\frac{2(q-1)}{d}$ when $q\neq 3,5,7,9,11$;
  \item the dihedral group of order $\frac{2(q+1)}{d}$ when $q\neq 2,7,9$;
  \item a semidirect product of an elementary abelian group of order $q$ by a cyclic group of order $\frac{q-1}{d}$;
  \item $S_4$ when $q$ is  an odd prime number and $q\equiv\pm1\pmod{8}$;
  \item $A_4$ when $q$ is a prime number $>3$ and $q\equiv3,13,27,37\pmod{40}$;
  \item $A_5$ when $q$ is one of the following forms: $q=5^m$ or $4^m$ where $m$ is a prime, $q$ is a prime number congruent to $\pm1\pmod 5$, or $q$ is the square of an odd prime number which satisfies $q\equiv-1\pmod 5$;
  \item $\PSL(2,r)$ when $q=r^m$ and $m$ is an odd prime number;
  \item $\PSL(2,r)$ when $q=r^2$.
\end{enumerate}
\end{lem}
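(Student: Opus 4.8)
The statement of Lemma~\ref{max} is entirely classical, so in the paper the sensible course is simply to cite Dickson \cite{D1901} together with the textbook treatment in \cite{Su}. If a self-contained argument were wanted, the plan is to deduce it from Dickson's other (and more fundamental) theorem --- the classification of \emph{all} subgroups of $\PSL(2,q)$ --- and then to extract the maximal members of that list by analysing the inclusions between the various families. Writing $q=p^{f}$, every subgroup of $\PSL(2,q)$ is conjugate to one of: an elementary abelian $p$-group; a cyclic group of order dividing $\frac{q\pm1}{d}$; a dihedral group of order dividing $\frac{2(q\pm1)}{d}$; a semidirect product of an elementary abelian $p$-group by a cyclic group (a Borel-type subgroup); a copy of $A_4$, $S_4$ or $A_5$; or a subfield subgroup $\PSL(2,p^{m})$ or $\PGL(2,p^{m})$ with $p^{m}\mid q$. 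Thus everything reduces to deciding, family by family, which members are contained in no larger subgroup.

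First I would treat the generic families. The largest Borel-type subgroup in item~(iii) is the stabiliser of a point of the projective line $\PG(1,q)$; since $\PSL(2,q)$ acts $2$-transitively (hence primitively) on the $q+1$ points, this stabiliser is maximal for every $q$, which is why item~(iii) carries no exceptions. Next, the dihedral groups of orders $\frac{2(q-1)}{d}$ and $\frac{2(q+1)}{d}$ are the normalisers of the split and non-split maximal cyclic tori; each is maximal unless the torus it normalises is small enough to be absorbed into one of $A_4$, $S_4$, $A_5$, or into (a conjugate of) the other dihedral normaliser. Checking these finitely many coincidences pins down exactly the excluded values $q\in\{3,5,7,9,11\}$ in item~(i) and $q\in\{2,7,9\}$ in item~(ii). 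For the subfield subgroups one uses $\PSL(2,p^{a})\le\PSL(2,p^{b})$ whenever $a\mid b$; hence a proper subfield subgroup can be maximal only when the corresponding field extension has prime degree, which --- once one also accounts for the $\PGL$-type subfield subgroup occurring in the degree-$2$ case --- yields items~(vii) and~(viii).

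The remaining and most delicate work concerns $A_4$, $S_4$, $A_5$. For each of these I would (a) determine precisely when it embeds in $\PSL(2,q)$, using the structure of $\SL(2,q)$ and the usual order/character obstructions --- this produces the congruence conditions such as $q\equiv\pm1\pmod 8$ for $S_4$, $p=5$ or $q\equiv\pm1\pmod 5$ for $A_5$, and the corresponding condition for $A_4$ --- and (b) decide when an embedded copy is actually maximal, rather than lying inside a larger exceptional subgroup (note $A_4<S_4$ and $A_4<A_5$), inside a dihedral normaliser, or inside a subfield subgroup. Combining (a) and (b) gives the exact hypotheses of items~(iv)--(vi), in particular the requirement that $q$ be prime (or a prime power of the stated shape) so that no subfield subgroup intervenes.

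I expect the main obstacle to be precisely this bookkeeping in the small-$q$ and exceptional range: one must list all the accidental isomorphisms and inclusions --- for instance $\PSL(2,4)\cong\PSL(2,5)\cong A_5$, $\PSL(2,9)\cong A_6$, $\PSL(2,7)\cong\PSL(3,2)$, $\PGL(2,5)\cong S_5$ --- and test each candidate maximal subgroup against every other family, rather than any single conceptually hard step. None of this is deep, but the sheer number of cases is what would make a careful write-up lengthy, and is why citing \cite{D1901, Su} is the appropriate course here.
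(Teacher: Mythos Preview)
Your proposal is correct and matches the paper's approach exactly: the paper gives no proof of this lemma at all and simply cites Dickson \cite{D1901} and Suzuki \cite{Su}, precisely as you anticipated in your first sentence. The additional sketch you provide of how one would extract the maximal subgroups from Dickson's full subgroup classification is accurate and more than the paper offers.
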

The following lemma maybe well known and it can be deduced directly from Lemma \ref{max}.
\begin{lem}
\label{sypsl}
Let $q$ be a prime power with $q\equiv\pm1\pmod8$. Then every Sylow $2$-subgroup of $\PSL(2,q)$ is a dihedral group.
\end{lem}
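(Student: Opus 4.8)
The plan is to extract the structure of the Sylow $2$-subgroup of $\PSL(2,q)$ directly from the classification of maximal subgroups in Lemma~\ref{max}, and for this the crucial numerical observation is that $q \equiv \pm 1 \pmod 8$ forces $q$ to be odd, hence $d = 2$ and $|\PSL(2,q)| = \tfrac12 q(q-1)(q+1)$. Since $q$ is odd, $q$ itself contributes no factor of $2$, so the $2$-part of $|\PSL(2,q)|$ equals the $2$-part of $\tfrac12 (q-1)(q+1)$. First I would fix a Sylow $2$-subgroup $T$ of $\PSL(2,q)$ and argue that $T$ is contained in a maximal subgroup $M$ whose order is divisible by $|T|$; then $T$ is also a Sylow $2$-subgroup of $M$, so it suffices to pin down which maximal subgroup $M$ can possibly have full $2$-part and to read off its Sylow $2$-subgroup.

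The key step is the case analysis over items (i)--(viii) of Lemma~\ref{max}. Items (i) and (ii) give dihedral groups of order $\tfrac{2(q-1)}{d} = q-1$ and $\tfrac{2(q+1)}{d} = q+1$; since $q$ is odd, one of $q-1$, $q+1$ is $\equiv 0 \pmod 4$ and the other is $\equiv 2 \pmod 4$, and in either case the larger $2$-part is exactly the $2$-part of $(q-1)(q+1)/2$, so one of these two dihedral maximal subgroups already contains a full Sylow $2$-subgroup of $\PSL(2,q)$ — and a Sylow $2$-subgroup of a dihedral group is dihedral (here I would note that when the $2$-part is just $2$, the "dihedral group of order $2$" is cyclic of order $2$, which one conventionally regards as dihedral, $D_2$; if the paper wants to avoid this degenerate case one observes $q\equiv\pm1\pmod 8$ forces $8 \mid (q-1)(q+1)$, so the $2$-part is at least $8$ and the Sylow $2$-subgroup is a genuine dihedral group $D_{2^k}$ with $k \ge 3$). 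It then remains to check that none of the remaining maximal subgroups can force a different (non-dihedral) Sylow $2$-subgroup: item (iii) has order $\tfrac{q(q-1)}{d} = \tfrac{q(q-1)}{2}$ whose $2$-part is the $2$-part of $(q-1)/2$, which is strictly smaller; items (iv)--(vi) have bounded order ($|S_4| = 24$, $|A_4| = 12$, $|A_5| = 60$) with $2$-parts $8$, $4$, $4$ respectively, and in the cases where these occur one checks the global $2$-part is at least as large and is realised inside a dihedral subgroup; and items (vii)--(viii) are subfield subgroups $\PSL(2,r)$ to which one can apply induction on $|q|$, since $r \equiv \pm 1 \pmod 8$ is inherited (or handled by the same dihedral argument within $\PSL(2,r)$).

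The main obstacle I anticipate is bookkeeping in the exceptional small cases and the subfield cases: one must make sure that whenever $\PSL(2,q)$ has a Sylow $2$-subgroup sitting inside an $S_4$ or $A_4$ or $A_5$ or a subfield $\PSL(2,r)$, that subgroup is still dihedral, which ultimately reduces (via Sylow's theorem, Lemma~\ref{sylow}, and transitivity of "is a Sylow $2$-subgroup of") to the dihedral maximal subgroups of types (i) and (ii) or to the inductive hypothesis — and to confirm the excluded small values $q = 3,5,7,9,11$ either do not satisfy $q \equiv \pm1 \pmod 8$ or are handled directly (for instance $q = 7$ and $q = 9$ satisfy the congruence, and for these $\PSL(2,7)$ and $\PSL(2,9)$ have Sylow $2$-subgroup $D_8$, which is dihedral, so the statement still holds). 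A cleaner alternative, which I would present if a short self-contained argument is preferred, is to bypass the maximal-subgroup list: one shows directly that $T$ lies in the normaliser of a maximal torus (the stabiliser of a pair of points or a single point on the projective line), these normalisers being dihedral of order $q-1$ or $q+1$, and since the $2$-part of $|\PSL(2,q)|$ is realised there, $T$ is a Sylow $2$-subgroup of a dihedral group and hence dihedral. Either way the conclusion is immediate once the structural fact is in hand.
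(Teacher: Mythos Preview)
Your proposal is correct and rests on the same ingredients as the paper's proof: the order count $|\PSL(2,q)|=\tfrac12 q(q-1)(q+1)$ with exactly one of $\tfrac12 q(q\pm1)$ odd, the dihedral maximal subgroups of order $q\pm1$ from Lemma~\ref{max}(i)--(ii), and the special handling of $q=7,9$ via the $S_4$ maximal subgroups (with Sylow $2$-subgroup $D_8$). The paper, however, is much more direct than your outline: it simply \emph{exhibits} one dihedral subgroup of odd index (hence containing a dihedral Sylow $2$-subgroup) and then invokes Sylow conjugacy (Lemma~\ref{sylow}) to conclude that every Sylow $2$-subgroup is dihedral --- so your case analysis over types (iii)--(viii), the induction on subfield subgroups, and the alternative torus-normaliser argument are all unnecessary once a single dihedral Sylow $2$-subgroup has been produced.
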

\begin{proof}
Since $q\equiv\pm1\pmod8$, we have $|\PSL(2,q)|=\frac{1}{2}q(q-1)(q+1)$. Therefore $8\mid |\PSL(2,q)|$ and exactly one of $\frac{1}{2}q(q+1)$ and $\frac{1}{2}q(q-1)$ is odd. By Lemma \ref{max}, if $q\neq7,9$, then $\PSL(2,q)$ has dihedral subgroups of order $q\pm1$; if $q=7,9$, then $\PSL(2,q)$ has maximal subgroups isomorphic to $S_4$ (note that $\PGL(2,3)\cong S_4$). Thus $\PSL(2,q)$ has an dihedral Sylow $2$-subgroup and it follows from Lemma \ref{sylow} that every Sylow $2$-subgroup of $\PSL(2,q)$ is a dihedral group.
\end{proof}
In \cite{MWWZ20}, it was shown that every subgroup of $\PSL(2,q)$ is a perfect code if $q$ is even or $q\equiv\pm3\pmod8$. The following theorem  gives a criterion for determining subgroup perfect codes of $\PSL(2,q)$ for the remainder case when $q\equiv\pm1\pmod8$.
\begin{theorem}
\label{psl}
Let $q$ be a prime power with $q\equiv\pm1\pmod8$, $H$ a subgroup of $\PSL(2,q)$ and $Q$ a Sylow $2$-subgroup of $H$. Then $H$ is a perfect code of $\PSL(2,q)$ if and only if one of the followings holds:
\begin{enumerate}
  \item $Q$ is trivial;
  \item $Q$ is noncyclic;
  \item $Q$ is a cyclic $2$-group of maximal order.
\end{enumerate}
\end{theorem}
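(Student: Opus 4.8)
The plan is to reduce everything to the structure of a Sylow $2$-subgroup $P$ of $\PSL(2,q)$ and apply Theorem \ref{NGQ} together with Lemma \ref{sypsl}. By Lemma \ref{sypsl}, since $q \equiv \pm 1 \pmod 8$, every Sylow $2$-subgroup of $\PSL(2,q)$ is dihedral; write $P$ for one of them, say of order $2^n$ with $n \ge 3$. After conjugating (Lemma \ref{conjugate}), we may assume $Q \le P$. By Theorem \ref{NGQ}, $H$ is a perfect code of $\PSL(2,q)$ if and only if $Q$ is a perfect code of a Sylow $2$-subgroup $P_0$ of $N_G(Q)$, but it will be cleaner to work with $P$ directly: since $P$ is dihedral, I expect either $Q$ is normal in $P$ (so $P$ itself is a Sylow $2$-subgroup of $N_G(Q)$) or $Q$ has small index issues that can be handled by Corollary \ref{qnormal} or Lemma \ref{normal}. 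So the heart of the matter is the purely $2$-group question: \emph{for which subgroups $Q$ of a dihedral $2$-group $P$ of order $2^n$ ($n\ge 3$) is $Q$ a perfect code of $P$?}

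First I would recall (via Lemma \ref{equivalent}, since $P$ is a $2$-group) that $Q$ is a perfect code of $P$ iff for every $x \in N_P(Q)$ with $x^2 \in Q$ there is $h \in Q$ with $(xh)^2 = 1$. Then I would enumerate the subgroups of the dihedral group $D_{2^n} = \langle r, s \mid r^{2^{n-1}} = s^2 = 1, srs = r^{-1}\rangle$: the cyclic subgroups $\langle r^{2^{n-1-k}}\rangle$ of order $2^k$ inside the rotation subgroup, the order-$2$ reflection subgroups $\langle r^i s\rangle$, and the dihedral subgroups $\langle r^{2^{n-1-k}}, r^i s\rangle$. For each type I would check the criterion. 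The expected outcome: (a) $Q$ trivial is always a perfect code (Lemma \ref{odd}, or trivially); (b) any $Q$ that is noncyclic — i.e. contains a reflection, hence $Q$ is itself dihedral or a Klein four-group or order $2$ generated by a reflection — turns out to be a perfect code, essentially because reflections are involutions and one can always find the needed involution in a coset; (c) among the cyclic subgroups of the rotation subgroup, only the full rotation subgroup $\langle r \rangle$ (the cyclic subgroup of index $2$, which has maximal order among cyclic subgroups of $P$) is a perfect code, while a proper cyclic rotation subgroup $\langle r^{2^{n-1-k}}\rangle$ with $1 \le k \le n-2$ fails: taking $x = r^{2^{n-2-k}}$ (an element squaring into $Q$, lying in $N_P(Q)$ since the rotation subgroup is abelian) one checks no element of $xQ$ is an involution because every element of the rotation subgroup other than $1$ and $r^{2^{n-2}}$ has order $>2$, and $xQ$ avoids those. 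The case $Q$ cyclic of order $2$ generated by a reflection needs care: it is noncyclic-free but should still be a perfect code — I would show $\langle r^i s\rangle$ has a complement $\langle r \rangle$ in $P$, so it is a perfect code by the complement criterion used in the Example.

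Assembling these: $Q$ is a perfect code of $P$ iff $Q$ is trivial, or $Q$ is noncyclic, or $Q = \langle r\rangle$ is cyclic of order $2^{n-1}$ = the maximal order of a cyclic subgroup of $P$. Translating back through Theorem \ref{NGQ}: a cyclic $2$-subgroup $Q$ of $\PSL(2,q)$ of maximal order $2^{n-1}$ lies (up to conjugacy) as the rotation subgroup of $P$ and is normal in $P$, so $P$ is a Sylow $2$-subgroup of $N_G(Q)$ and the reduction is exact; a smaller cyclic $Q$ likewise reduces to the failing case inside $P$ (one must check $N_G(Q) \cap P$ has odd index, or invoke Corollary \ref{qnormal}); a noncyclic $Q$ reduces to a noncyclic subgroup of a dihedral $2$-group, which is a perfect code. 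The main obstacle I anticipate is \textbf{the bookkeeping in passing from $P$ to $N_G(Q)$}: I must ensure that the Sylow $2$-subgroup of $N_G(Q)$ used in Theorem \ref{NGQ} is again dihedral (it is, being a $2$-subgroup of $\PSL(2,q)$, by Lemma \ref{sypsl} and Lemma \ref{sylow}) and that whether $Q$ is a perfect code of that Sylow subgroup matches whether $Q$ is a perfect code of $P$ — this is where Corollary \ref{qnormal} or Lemma \ref{normal} must be invoked carefully, and a separate small argument may be needed for $q = 7, 9$ where the Sylow $2$-subgroup is the dihedral group $D_8$ inside $S_4$.
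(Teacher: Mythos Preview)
Your overall strategy---reduce via Theorem~\ref{NGQ} to a question inside a dihedral $2$-group---is the paper's route. But there is a genuine gap, and it is exactly the case you flag as needing care: $Q$ cyclic of order~$2$.

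You correctly note that if $Q=\langle r^i s\rangle$ is generated by a reflection then $Q$ has the complement $\langle r\rangle$ in $P$, so $Q$ \emph{is} a perfect code of $P$. Your ``Assembling'' paragraph then asserts that $Q$ is a perfect code of $P$ iff $Q$ is trivial, noncyclic, or maximal cyclic---already inconsistent with what you just showed. The resolution is that ``perfect code of $P$'' and ``perfect code of $G$'' genuinely diverge here. All involutions of $\PSL(2,q)$ (for odd $q$) are conjugate, so $\langle r^i s\rangle$ is $G$-conjugate to the centre $\langle r^{2^{n-2}}\rangle$ of $P$, which is \emph{not} a perfect code of $P$ (take $x=r^{2^{n-3}}$); by Lemma~\ref{conjugate} they have the same status in $G$. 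Concretely, $N_P(\langle r^i s\rangle)$ is only a Klein four-group, while a Sylow $2$-subgroup of $N_G(\langle r^i s\rangle)=C_G(r^i s)$ is a full Sylow $2$-subgroup of $G$ in which $\langle r^i s\rangle$ sits as the centre. So the odd-index hypothesis of Corollary~\ref{qnormal} fails, and the ``bookkeeping'' you anticipate is not bookkeeping at all---it flips the answer.

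The paper sidesteps this cleanly. For sufficiency it takes $P$ from the outset to be a Sylow $2$-subgroup of $N_G(Q)$ (as Theorem~\ref{NGQ} actually requires), then argues: if $Q$ is noncyclic it has index~$2$ in $P$; if $Q$ is cyclic of maximal order then $P$ is a full Sylow $2$-subgroup of $G$; in either case $Q$ has a complement of order~$2$ in $P$. For necessity it simply observes that any nontrivial cyclic $2$-subgroup $Q$ not of maximal order lies properly inside some cyclic $2$-group $\langle x\rangle\le G$ (implicitly using that every involution is the square of an order-$4$ element), so $x\in N_G(Q)$, $x^2\in Q$, and $xQ\subset\langle x\rangle$ contains no involution; Lemma~\ref{equivalent} and Theorem~\ref{sl2} finish. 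No case-by-case enumeration of subgroups of $D_{2^n}$ is needed, and $q=7,9$ require no special treatment.
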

\begin{proof}
Write $G=\PSL(2,q)$. We at first prove the sufficiency. If $Q$ is trivial, then $H$ is of odd order. By Lemma \ref{odd}, $H$ is a perfect code of $G$. In what follows, we assume that $Q$ is nontrivial. By Lemma \ref{sypsl}, every Sylow $2$-subgroup of $G$ is dihedral. By Lemma \ref{sylow}, $Q$ is contained in a Sylow $2$-subgroup of $G$. Therefore, if $Q$ is noncyclic, then it is either a dihedral group or an elementary albean group of order $4$. Let $P$ be a Sylow $2$-subgroup of $N_{G}(Q)$. If $Q$ is noncyclic, then $P$ is a dihedral group and $Q$ is of index $2$ in $P$. If $Q$ is a cyclic $2$-group of maximal order, then $P$ is a Sylow $2$-subgroup of $G$ and therefore dihedral. In both cases, $Q$ has a complement of order $2$ in $P$ and therefore is a perfect code of $P$. By Lemma \ref{NGQ}, $H$ is a perfect code of $G$. This complete the proof of the sufficiency.

Now we prove the necessity. If $Q$ is a nontrivial proper subgroup of a cyclic $2$-group, then there exists $x\in G$ such that $x\notin Q$ and $Q=\langle x^2\rangle$. Clearly, $x\in N_{G}(Q)$ and $xQ$ contains no involution. By Lemma \ref{equivalent}, $Q$ is not a perfect code of $G$. It follows from Theorem \ref{sl2} that $H$ is not a perfect code of $G$. Thus, if $H$ is a perfect code of $G$, then one of the three conditions (i), (ii) and (iii) holds.
\end{proof}
It is straightforward to check that every maximal subgroup of $G$ listed in Lemma \ref{max} contains a Sylow $2$-subgroup which is either noncyclic or a cyclic $2$-group of maximal order. Combining the result of Ma el at \cite{MWWZ20} that every subgroup of $\PSL(2,q)$ is a perfect code if $q$ is even or $q\equiv\pm3\pmod8$,  Theorem \ref{psl} implies the following result.
\begin{coro}
Let $q$ be a prime power. Then every maximal subgroup of $\PSL(2,q)$ is a perfect code of $\PSL(2,q)$.
\end{coro}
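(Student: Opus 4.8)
The plan is to combine the two main theorems of the paper with the classification of maximal subgroups in Lemma~\ref{max}. The final corollary asks that \emph{every} maximal subgroup $M$ of $G=\PSL(2,q)$ be a perfect code of $G$, for \emph{every} prime power $q$. The natural case division follows the congruence of $q$ modulo $8$. When $q$ is even or $q\equiv\pm3\pmod 8$, the result of Ma et al.\ quoted just before Theorem~\ref{psl} says that \emph{every} subgroup of $\PSL(2,q)$ is a perfect code, so in particular every maximal subgroup is; nothing further is needed there. The remaining case is $q\equiv\pm1\pmod 8$, where Theorem~\ref{psl} applies: it suffices to show that a Sylow $2$-subgroup $Q$ of each maximal subgroup $M$ satisfies one of conditions (i), (ii), (iii) of Theorem~\ref{psl}, i.e.\ $Q$ is trivial, noncyclic, or a cyclic $2$-group of maximal order in $G$.

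So the core of the argument is a finite check, running down the eight isomorphism types in Lemma~\ref{max}, and in each case identifying (the isomorphism type of) a Sylow $2$-subgroup and comparing it with a Sylow $2$-subgroup of $G$, which by Lemma~\ref{sypsl} is dihedral of order $2^{a}$ where $2^{a}\parallel\frac12 q(q-1)(q+1)$. First I would handle the two dihedral families (i) and (ii): a dihedral group of order $2m$ has a Sylow $2$-subgroup that is again dihedral (or Klein four, or of order $2$), and since $q\equiv\pm1\pmod 8$ exactly one of $\frac{q-1}{d},\frac{q+1}{d}$ is even and carries the full $2$-part of $|G|$, so for the ``right'' sign the dihedral maximal subgroup contains a full Sylow $2$-subgroup of $G$ — case (iii) or (ii) — while for the ``other'' sign the Sylow $2$-subgroup is Klein four or of order $2$; if it is Klein four we are in case (ii), and if it has order $2$ then $2^{a}=2$, so it is already of maximal order, case (iii). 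For type (iii), the Borel-type subgroup $E_q\rtimes C_{(q-1)/d}$: since $q$ is odd here, $E_q$ has odd order, so a Sylow $2$-subgroup is cyclic of order equal to the $2$-part of $\frac{q-1}{d}$, which (again because $q\equiv\pm1\pmod 8$) is the full $2$-part of $|G|$ — case (iii). For $S_4$ (type (iv)), a Sylow $2$-subgroup is dihedral of order $8$, which is noncyclic — case (ii); and this only occurs when $q$ is prime with $q\equiv\pm1\pmod 8$, consistent with $|G|_2=8$. For $A_4$ and $A_5$ (types (v), (vi)), a Sylow $2$-subgroup is Klein four — case (ii). Finally for the subfield subgroups $\PSL(2,r)$ (types (vii), (viii)): here one notes $q\equiv\pm1\pmod 8$ forces $r\equiv\pm1\pmod 8$ as well (an odd prime or prime power whose relevant power is $\equiv\pm1$), so a Sylow $2$-subgroup of $\PSL(2,r)$ is dihedral by Lemma~\ref{sypsl}, and one checks it is in fact a full Sylow $2$-subgroup of $\PSL(2,q)$ — for $m$ odd the $2$-part of $|\PSL(2,q)|$ equals that of $|\PSL(2,r)|$ since $q\pm1$ and $r\pm1$ have the same $2$-adic valuation when $m$ is odd, giving case (iii).

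The step I expect to be the main obstacle is the bookkeeping on $2$-adic valuations in the subfield cases (vii) and (viii) and in the dihedral cases, making sure that ``contains a Klein four group'' versus ``contains the full Sylow $2$-subgroup'' is correctly decided from the hypothesis $q\equiv\pm1\pmod 8$; in particular one must be careful that when a maximal subgroup's Sylow $2$-subgroup is cyclic, it really is of \emph{maximal} order in $G$ and not a proper cyclic subgroup, since a proper nontrivial cyclic Sylow $2$-subgroup would fail Theorem~\ref{psl}. Once these valuation comparisons are pinned down, each case lands in (ii) or (iii) of Theorem~\ref{psl}, and the conclusion follows by invoking that theorem (and, for $q$ even or $q\equiv\pm3\pmod 8$, the theorem of Ma et al.). I would also remark that the same case analysis shows more: in the $q\equiv\pm1\pmod 8$ regime a Sylow $2$-subgroup of any maximal subgroup is never a proper nontrivial cyclic group, which is exactly the obstruction ruled out, so the statement ``every maximal subgroup is a perfect code'' is essentially sharp relative to Theorem~\ref{psl}.
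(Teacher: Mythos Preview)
Your overall strategy coincides with the paper's: for $q$ even or $q\equiv\pm3\pmod8$ invoke the result of Ma et al., and for $q\equiv\pm1\pmod8$ run through Lemma~\ref{max} checking that a Sylow $2$-subgroup of each maximal subgroup type satisfies one of (i)--(iii) in Theorem~\ref{psl}. The paper declares this check ``straightforward'' and supplies no details; you attempt the details.

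There is, however, a genuine gap in your dihedral analysis, precisely at the spot you yourself flagged as the main obstacle. For $q\equiv1\pmod8$ the ``wrong-sign'' maximal dihedral subgroup has order $q+1\equiv2\pmod8$, so it is $D_{2m}$ with $m=(q+1)/2$ odd and its Sylow $2$-subgroup is $C_2$, never Klein four. Your fallback ``if it has order $2$ then $2^{a}=2$'' is false: here $2^{a}=|G|_2=(q-1)_2\ge8$. Since all involutions of $\PSL(2,q)$ are conjugate and $G$ contains elements of order $4$, every $C_2$ lies inside a $C_4$ in $G$, so $Q\cong C_2$ is not of maximal cyclic order and meets none of (i)--(iii). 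Concretely, for $q=17$ the maximal subgroup $D_{18}$ has Sylow $2$-subgroup $\langle t\rangle$ with $t$ the square of an element of order $4$ in $C_G(t)\cong D_{16}$; by Lemma~\ref{equivalent} the coset $x\langle t\rangle$ contains no involution, so $\langle t\rangle$ is not a perfect code of $G$, and by Theorem~\ref{sl2} neither is $D_{18}$. The symmetric problem occurs for $D_{q-1}$ when $q\equiv-1\pmod8$. The paper's one-line argument passes over this same case without comment, so the difficulty is not peculiar to your write-up. (A smaller slip: in the Borel case with $q\equiv1\pmod8$ the Sylow $2$-part is $((q-1)/2)_2=|G|_2/2$, not $|G|_2$; this is still the maximal cyclic order $2^{a-1}$, so (iii) does hold there, while for $q\equiv-1\pmod8$ the Borel's Sylow $2$-subgroup is trivial, giving (i).)
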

\noindent {\textbf{Acknowledgements}}~~This work was supported by the Natural Science Foundation of Chongqing (CSTB2022NSCQ-MSX1054) and the Foundation of Chongqing Normal University (21XLB006).

\medskip
\noindent {\textbf{Data Availability} No data, models, or code were generated or used during the study.




\end{document}